\documentclass{amsart}
\usepackage{amsmath,amsfonts,bm}
\usepackage[]{hyperref}
\newcommand{\FF}{\mathbb{F}}
\newcommand{\TT}{\mathbb{T}}
\newcommand{\CC}{\mathbb{C}}
\newcommand{\ZZ}{\mathbb{Z}}
\newcommand{\QQ}{\mathbb{Q}}
\newcommand{\pfrak}{\mathfrak{p}}
\newcommand{\qfrak}{\mathfrak{q}}
\newcommand{\cfrak}{\mathfrak{c}}

\newcommand{\ffrak}{\mathfrak{f}}
\newcommand{\Gal}{\operatorname{Gal}}
\newcommand{\pitilde}{\widetilde{\pi}}

\newcommand{\GL}{\operatorname{GL}}

%-----------------------------------------------------------
%Theorem environments
\theoremstyle{plain}
\newtheorem{thm}{Theorem}[section]
\newtheorem*{thm*}{Theorem}
\newtheorem{cor}[thm]{Corollary}
\newtheorem{lem}[thm]{Lemma}
\newtheorem{prop}[thm]{Proposition}

\theoremstyle{definition}

\newtheorem{rem}[thm]{Remark}

%-----------------------------------------------------------
%\newenvironment{notation}{{\bf Notation}\it}

\title[An Integral Digit Derivative Basis]{An Integral Digit Derivative Basis for Carlitz Prime Power Torsion Extensions}
\author{A. Maurischat}
\address{Lehrstuhl A f\"ur Mathematik, RWTH Aachen University,
Germany}
\email{andreas.maurischat@matha.rwth-aachen.de}

\author{R. Perkins}
\date{\today}
\address{IWR, University of Heidelberg, Im Neuenheimer Feld 205, 69120 Heidelberg, Germany}
\email{rudolph.perkins@iwr.uni-heidelberg.de}
\thanks{The second named author gratefully acknowledges the support of The Alexander von Humboldt Foundation.}

\begin{document}

\begin{abstract}
Let $\pfrak$ be a monic irreducible polynomial in $A:=\FF_q[\theta]$, the ring of polynomials in the  indeterminate $\theta$ over the finite field $\FF_q$, and let $\zeta$ be a root of $\pfrak$ in an algebraic closure of $\FF_q(\theta)$. For each positive integer $n$, let $\lambda_n$ be a generator of the $A$-module of Carlitz $\pfrak^n$-torsion. We give a basis for the ring of integers $A[\zeta,\lambda_n] \subset K(\zeta, \lambda_n)$ over $A[\zeta] \subset K(\zeta)$ which consists of monomials in the hyperderivatives of the Anderson-Thakur function $\omega$ evaluated at the roots of $\pfrak$. We also give an explicit field normal basis for these extensions. This builds on (and in some places, simplifies) the work of Angl\`es-Pellarin.
\end{abstract}

\maketitle

\section{Introduction}

\subsection{Carlitz torsion extensions}
Let $\FF_q$ be the finite field with $q$ elements and $\theta$ an indeterminate over this field. The integral extensions of $A := \FF_q[\theta]$ which interest us arise via the {\it Carlitz module functor} which takes as input an $A$-algebra $R$ and returns the $A$-module consisting of the additive group of $R$ with $A$-action given by mapping $X \in R$ to the evaluation at $X$ of the twisted polynomials determined by $\FF_q$-algebra map
\[\theta \mapsto \cfrak_\theta := \tau + \theta \in A\{\tau\},\]
and where evaluation is defined by $\tau^i(X) := X^{q^i}$. For example, $\theta^2 \in A$ acts via
\[\cfrak_{\theta^2}(X) = \cfrak_\theta( \cfrak_\theta(X))= X^{q^2} + (\theta^q+\theta)X^q + \theta^2 X .\]

Let $K := \FF_q(\theta)$. To the Carlitz module one associates an $\FF_q$-linear entire power series
\[\exp_C(X) := \sum_{j \geq 0} \frac{X^{q^j}}{D_j} \in K[[X]],\]
called the {\it Carlitz exponential}, where $D_j$ is the product of all monic polynomials in $A$ of degree $j$. This is the unique $\FF_q$-linear entire series such that $\frac{d}{dX}\exp_C = 1$ and
\[ \exp_C(\theta X) = \cfrak_\theta(\exp_C(X)). \]
We write $\pitilde A$ for the kernel of $\exp_C$ which is a free rank one $A$-submodule of $\CC_\infty$, the completion of the algebraic closure of $K_\infty := \FF_q((1/\theta))$ equipped with the canonical extension $|\cdot|$ of the absolute value for which $K_\infty$ is complete and normalized so that $|\theta| = q$;
we will make an explicit choice of $\pitilde$ just below.

For all $a \in A$, the polynomial $\cfrak_a(X)$ is separable in $X$, and adjoining its roots --- the so called Carlitz-$a$-torsion $C[a]$ --- to $K$ gives rise to a {\it Carlitz torsion extension} $K(C[a])$.
These extensions are a function field analog of the classical cyclotomic extensions of the rational integers, and one may consult Rosen's book \cite[Chapter 11]{Rosen} for a thorough summary of their basic properties which we take for granted in this note.

We will build on the work of Angl\`es-Pellarin \cite{APinv} focusing on the {\it Carlitz prime power torsion extensions} of $K$,
\[ K_n:=K(C[\pfrak^{n+1}]); n \geq 0,\]
arising from the Carlitz $\pfrak^n$-torsion of a fixed monic irreducible $\pfrak$. The ring of integers of $K_n$ will be denoted by $A_n$.
In fact, we have $A_n=A[x_n]$, for $x_n$ an $A$-module generator of $C[\pfrak^{n+1}]$ (e.g.~$x_n=\exp_C(\pitilde/\pfrak^{n+1})$).
The Galois group $\Gal(K_n/K)$ is isomorphic to $(A/\pfrak^{n+1} A)^\times$
via
\begin{equation}\label{eq:galois-action}
 (A/\pfrak^{n+1} A)^\times \cong \Gal(K_n/K),a\to \sigma_a
\end{equation}
given by $\sigma_a(x)=\cfrak_a(x)$ for all $x\in C[\pfrak^{n+1}]$.
Hence, one has
\[ [K_n: K] = |(A/\pfrak^{n+1} A)^\times| = |\pfrak|^{n+1} - |\pfrak|^{n}.\]
Hence, the degree of the extension $K_n/K$ is highly divisible by the characteristic of $K$, which makes the Galois module structure of the rings of integers in these extensions more delicate than for the cyclotomic extensions of $\mathbb{Q}$, and dealing with this gives rise to many interesting new phenomena not present in the classical setting.

\subsection{Angl\`es-Pellarin}
One has the now classical analytic description of all Carlitz torsion points in terms of the division values of the Carlitz exponential
\[ \bigcup_{a \in A} C[a] = \{ \exp_C(\pitilde \kappa) : \kappa \in K\}, \]
thus fulfilling the dream of Kronecker's youth or the analog of Hilbert's 12th problem for these extensions.
Remarkably, in a recent discovery by Angl\`es and Pellarin \cite{APinv}, another separate analytic description has been given for generators of these same Carlitz torsion extensions, after allowing for a finite constant extension of $K$, i.e. one obtained by adjoining an element algebraic over $\FF_q$ to $K$. The meromorphic function appearing in the work of Angl\`es-Pellarin {\it ibid.} arises from the scattering matrix for the Carlitz module, the so-called {\it Anderson-Thakur function} $\omega$.

To describe this function, we let $t$ be another indeterminate over $\CC_\infty$, and we fix an element $\lambda_\theta \in \CC_\infty$ of Carlitz $\theta$-torsion, so satisfying $\lambda_\theta^{q-1} = -\theta$.
We let
\[ \omega(t) := \lambda_\theta \prod_{j \geq 0} (1 - \frac{t}{\theta^{q^j}})^{-1} \in \CC_\infty[[t]]. \]
One easily checks that $\omega$ converges for $|t| < |\theta|$ and generates the free $\FF_q[t]$-submodule of $\CC[[t]]$ consisting of those functions both satisfying \begin{equation} \label{omegadiffeq} \sum_{i \geq 0} c_i^q t^i = (t - \theta) \sum_{i \geq 0} c_i t^i
\end{equation}
and converging on $\{t \in \CC_\infty : |t| \leq 1 \}$.
Further, by Anderson's general theory of scattering matrices (see \cite[Proposition 3.3.2]{AGtmot} and \cite[\S 2.5]{AGTDann}), the residue of $\omega$ at $t = \theta$ gives rise to a fundamental period of the Carlitz module, which we fix from now on
\[ \pitilde := - \text{res}_{t = \theta} \ \omega = -\lim_{t \rightarrow \theta} (t - \theta) \omega(t). \]

Let $\zeta$ be a root of the monic irreducible polynomial $\pfrak$ fixed above in the algebraic closure of $\FF_q$ inside $\CC_\infty$, Angl\`es and Pellarin expand the function $\omega$ about $t = \zeta$ as follows
\begin{equation} \label{omegaaboutzeta}
\omega(t) = \sum_{n \geq 0} \omega^{(n)}(\zeta) \cdot (t - \zeta)^n \in \CC_\infty[[t - \zeta]].
\end{equation}
They prove \cite[Theorem 3.3]{APinv} that for all $n \geq 0$, one has
\[ K(\omega^{(n)}(\zeta)) = K(\zeta, \exp_C(\pitilde/\pfrak^{n+1})) = K_n(\zeta), \]
in particular: the elements $\omega^{(n)}(\zeta)$ are all algebraic over $K$. The $n = 0$ case is especially interesting. Angl\`es and Pellarin prove \cite[Theorem 2.9]{APinv} that $\omega(\zeta)$ is $(\frac{d}{d\theta}\pfrak)(\zeta)^{-1}$ times a basic Gauss-Thakur sum for the $\FF_q$-algebra map on $A$ determined by $\theta \mapsto \zeta$, which is itself a positive characteristic valued Dirichlet character.

Now let $\zeta_1=\zeta,\zeta_2=\zeta^q,\dots,\zeta_d=\zeta^{q^{d-1}}$ be all the distinct roots of $\pfrak$; hence, $\pfrak$ has degree $d$ in $\theta$. The connection made by Angl\`es-Pellarin between $\omega(\zeta_i)$ and the basic Gauss-Thakur sums allows one to conclude that the digit products
\begin{equation} \label{APomegadb} \left\{ \omega(\zeta_1)^{e_1} \omega(\zeta_2)^{e_2} \dots \omega(\zeta_d)^{e_d} : \begin{array}{l}
0 \leq e_1,\dots,e_d \leq q-1, \text{ and } \\ e_j \neq q-1, \text{for some } j \end{array}  \right\}
\end{equation}
form an integral basis for the extension $K(\zeta, \exp_C(\pitilde/\pfrak)) / K(\zeta)$. Further, the monomial $\omega(\zeta_1)^{e_1} \omega(\zeta_2)^{e_2} \dots \omega(\zeta_d)^{e_d}$ lies in the isotypic component for the Dirichlet character on $A$ given by
\[ a \mapsto a(\zeta_1)^{e_1} a(\zeta_2)^{e_2} \dots a(\zeta_d)^{e_d} = a(\zeta)^{\sum e_i q^i}; \]
here we identify the Galois group of $K(\zeta, \exp_C(\pitilde/\pfrak)) / K(\zeta)$ with $(A/\pfrak A)^\times$ as in~\eqref{eq:galois-action}.

\subsection{New results}
The main result of this paper is the generalization of Angl\`es-Pellarin's $\omega$ digit basis \eqref{APomegadb} for $K(\zeta, \exp_C(\pitilde/\pfrak)) / K(\zeta)$ to an $\omega$ digit derivative basis for $K(\zeta, \exp_C(\pitilde/\pfrak^{n+1}))$ over $K(\zeta)$.

\begin{thm*}
Let $\pfrak$ be a monic irreducible of $A$ all of whose distinct roots are $\zeta_1,\dots,\zeta_d$.
The following set provides an integral basis for the extension $K(\zeta, \exp_C(\pitilde/\pfrak^{n+1})) /$ $K(\zeta)$:
\[ \left \{ \prod_{j = 0}^n \prod_{i = 1}^d \omega^{(j)}(\zeta_i)^{e_{j,i}} : \begin{array}{l} 0 \leq e_{j,i} \leq q-1, \text{ for all } 0 \leq j \leq n \text{ and } 1 \leq i \leq d, \text{ and } \\ e_{0,i} \neq q-1, \text{ for some } 0 \leq i \leq d. \end{array}  \right\}. \]
\end{thm*}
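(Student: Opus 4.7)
I would proceed by induction on $n$, the base case $n=0$ being Angl\`es-Pellarin's theorem recalled in \eqref{APomegadb}. A direct count shows the proposed set has cardinality $q^{d(n+1)} - q^{dn} = q^{dn}(q^d-1)$, which matches $[K_n(\zeta):K(\zeta)]$: this latter equality follows since $K(\zeta)=\FF_{q^d}(\theta)$ is a constant field extension of $K$, linearly disjoint over $K$ from the totally-ramified-at-$\pfrak$ extension $K_n/K$. The monomials lie in $A_n[\zeta]$ by combining the Angl\`es-Pellarin inclusion $\omega^{(j)}(\zeta_i)\in K_n(\zeta)$ with the convergence of $\omega$ on $\{|t|\le 1\}$, which keeps all the hyperderivative coefficients of $\omega$ at $\zeta_i$ in the valuation ring at every non-archimedean place.

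For the inductive step, factor the proposed set as $B_n=B_{n-1}\cdot T_n$, where
\[ T_n := \Big\{\prod_{i=1}^d \omega^{(n)}(\zeta_i)^{e_i} : 0\le e_i\le q-1\Big\}, \]
a factorization whose cardinalities multiply correctly: $(q^{dn}-q^{d(n-1)})\cdot q^d = q^{d(n+1)}-q^{dn}$. By transitivity of integral bases in the tower $A[\zeta]\subset A_{n-1}[\zeta]\subset A_n[\zeta]$ combined with the inductive hypothesis, the theorem reduces to showing that $T_n$ is a relative integral basis for $A_n[\zeta]/A_{n-1}[\zeta]$; this is a degree-$q^d$ extension, totally ramified at the prime above $\pfrak$ and unramified elsewhere.

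For the relative step, I would analyze the Galois action on $\omega^{(n)}(\zeta_i)$. Taking $\sigma_a\in\Gal(K_n(\zeta)/K(\zeta))\cong (A/\pfrak^{n+1})^\times$, the identity $\sigma_a(\omega(t))=a(t)\omega(t)$ that holds in the relevant Tate algebra, expanded in Taylor series about $t=\zeta_i$ via the Leibniz rule for hyperderivatives, yields the unipotent triangular formula
\[ \sigma_a\bigl(\omega^{(n)}(\zeta_i)\bigr) = \sum_{k=0}^n a^{(n-k)}(\zeta_i)\,\omega^{(k)}(\zeta_i). \]
Restricting to $a\equiv 1\pmod{\pfrak^n}$, i.e.~to $\Gal(K_n(\zeta)/K_{n-1}(\zeta))$, the intermediate hyperderivatives $a^{(m)}(\zeta_i)$ for $0<m<n$ all vanish since $\pfrak^n$ has order of vanishing $n$ at $\zeta_i$, collapsing the formula to an Artin-Schreier-type relation $\sigma_a(\omega^{(n)}(\zeta_i)) = \omega^{(n)}(\zeta_i) + c_i(a)\,\omega(\zeta_i)$ for an explicit $c_i(a)\in K(\zeta)$. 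The $d$-tuple of additive characters $a\mapsto c_i(a)$ separates $\Gal(K_n(\zeta)/K_{n-1}(\zeta))\cong (1+\pfrak^n A)/(1+\pfrak^{n+1}A)$, and the monomial products in $T_n$ produce all $q^d$ characters of this group, yielding $K_{n-1}(\zeta)$-linear independence of $T_n$.

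The principal obstacle is to upgrade this linear independence to the full integral basis statement: the $A_{n-1}[\zeta]$-module spanned by $T_n$ must equal all of $A_n[\zeta]$, not merely a sub-order. I would compute $\operatorname{disc}(T_n/A_{n-1}[\zeta])$ directly from the triangular Galois formulas above and compare with $\operatorname{disc}(A_n[\zeta]/A_{n-1}[\zeta])$, which by Carlitz conductor--discriminant theory is a prescribed power of the prime above $\pfrak$; equality up to units in $A_{n-1}[\zeta]^\times$ forces the spanning property. Equivalently, one may verify that the minimal polynomial of $\omega^{(n)}(\zeta_1)$ over $A_{n-1}[\zeta]$ is Eisenstein at the prime above $\pfrak$. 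The bulk of the technical effort, and the main departure from the $n=0$ case, lies in pinning down the exact $\pfrak$-adic valuations of the $\omega^{(n)}(\zeta_i)$ in $K_n(\zeta)$.
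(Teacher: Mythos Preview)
Your inductive scaffold---reduce to the relative step $A_n[\zeta]/A_{n-1}[\zeta]$ via the factorization $B_n=B_{n-1}\cdot T_n$---is exactly the paper's structure, and your Galois formula $\sigma_{1+\pfrak^n b}(\omega^{(n)}(\zeta_i))=\omega^{(n)}(\zeta_i)+(\pfrak'(\zeta_i))^n b(\zeta_i)\omega(\zeta_i)$ is the paper's as well. Two of the supporting steps, however, do not stand as written.

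First, the integrality of $\omega^{(j)}(\zeta_i)$ cannot be deduced from convergence of $\omega$ on $\{|t|\le1\}$: that is a statement about the $\infty$-adic absolute value on $\CC_\infty$ and says nothing about the finite places of $K_n(\zeta)$. The paper instead writes $\omega^{(n)}(\zeta)=\sum_i c_{(n),i}\zeta^i$ and shows each $c_{(n),i}\in C[\pfrak^{n+1}]\subset A_n$ directly from the functional equation. Second, your claim that ``the monomial products in $T_n$ produce all $q^d$ characters of this group'' is not meaningful here: $\Gal(K_n(\zeta)/K_{n-1}(\zeta))$ is an elementary abelian $p$-group acting in characteristic $p$, so there are no nontrivial multiplicative characters, and the monomials in $T_n$ are not eigenvectors but rather sit in a unipotent filtration. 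Linear independence over $K_{n-1}(\zeta)$ therefore does not follow from character orthogonality.

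The paper handles both the basis and the integrality by the valuation computation you allude to at the end: one shows $v_1\bigl((\omega^{(n)})^{\bm e}\bigr)=\sum e_iq^{i-1}/(|\pfrak|^n(|\pfrak|-1))$, and these $q^d$ values are pairwise distinct representatives of $\frac{1}{|\pfrak|^n(|\pfrak|-1)}\ZZ$ modulo $\frac{1}{|\pfrak|^{n-1}(|\pfrak|-1)}\ZZ$. This simultaneously gives $K_{n-1}(\zeta)$-linear independence and shows that if $x=\sum c_{\bm e}(\omega^{(n)})^{\bm e}$ is $v_i$-integral then so is each $c_{\bm e}$. For primes away from $\pfrak$ the paper does \emph{not} use a discriminant comparison; instead it applies $\sigma_{1+\pfrak^n b}$ for varying $b\in A/\pfrak A$, uses Lagrange interpolation in $\FF_q(\zeta)$ to isolate each term $c_{\bm g}\cdot\bigl((\pfrak_t^{(1)})^n\omega\bigr)^{\bm g}$ inside $A_n[\zeta]$, and then observes that $\omega(\zeta_i)$ has zeros only above $\pfrak$, so $c_{\bm g}$ can have poles only above $\pfrak$---which the valuation argument already excludes. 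Your discriminant route is plausible but would require computing the discriminant of a basis on which the Galois group acts unipotently rather than diagonally, which is messier than the paper's argument; the Eisenstein alternative would only yield the power basis in $\omega^{(n)}(\zeta_1)$, not the mixed monomials in all the $\omega^{(n)}(\zeta_i)$.
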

A more detailed statement and proof appears in Theorem \ref{thm:integral-basis} below. Of course, digit products come up often in the arithmetic of the Carlitz module, and the results of Conrad \cite{CKdp} were an inspiration for the statement given above.
Our interest in proving the theorem above is the potential utility in further understanding Taelman's class and unit modules, generalizing the work of Angl\`es-Taelman \cite{ATplms} and in this direction we briefly mention in \S \ref{matrixlvalssection} some matrix $L$-values naturally arising from an identity of Pellarin.

Another goal of this short note is to explicate \cite[Section 3.1]{APinv} wherein the Galois action on the elements $\omega^{(n)}(\zeta)$ is briefly discussed.
Again, identifying the Galois group of $K(\zeta, \exp_C(\pitilde/\pfrak^{n+1})) / K(\zeta)$ with $(A/\pfrak^{n+1}A)^\times$ via
\[a \mapsto \bigl(\sigma_a : \exp_C(\pitilde/\pfrak^{n+1}) \mapsto \exp_C(\pitilde a/\pfrak^{n+1})\bigr) \]
we will show that, for each $a \in (A/\pfrak^{n+1}A)^\times$, we have
\[ \sigma_a * \left( \begin{matrix} \omega^{(n)} \\ \omega^{(n-1)} \\ \vdots \\ \omega  \end{matrix} \right)_{t=\zeta} = \left( \begin{matrix} a  & a^{(1)} & \cdots & a^{(n)} \\ 0 & a & \ddots & \vdots \\ \vdots & \ddots & \ddots & a^{(1)} \\ 0 & \cdots & 0 & a \end{matrix} \right)_{\theta = \zeta} \left( \begin{matrix} \omega^{(n)} \\ \omega^{(n-1)} \\ \vdots \\ \omega  \end{matrix} \right)_{t=\zeta}; \]
here $\sigma_a$ acts component-wise on the vector, the matrix subscript denotes evaluation of each entry as specified, and, as before,
\[a = \sum_{j \geq 0} a^{(j)}(\zeta)\cdot(\theta - \zeta)^j \in \FF_q(\zeta)[\theta - \zeta].\]
Thus, as one readily shows, the $\FF_q(\zeta)$-linear subspace of $K_n(\zeta)$ generated by $\omega(\zeta)$, $\omega^{(1)}(\zeta),\dots$, $\omega^{(n)}(\zeta)$ provides a faithful representation of $(A/\pfrak^{n+1}A)^\times$ over $\FF_q(\zeta)$ of minimal dimension.
Further, easy relations coming from the difference equation \eqref{omegadiffeq} satisfied by $\omega$ will allow us to deduce that each basis monomial $ \prod_{j = 0}^n \prod_{i = 1}^d \omega^{(j)}(\zeta_i)^{e_{j,i}}$ --- as in the theorem above --- is killed, for all $a \in (A/\pfrak^{n+1}A)^\times$, by $(\sigma_a - a(\zeta)^k)^m$, for some sufficiently large $m$ and some $k$ depending on the $e_{j,i}$ but not on $a$. In other words, each such monomial lies in a generalized eigenspace for the character $a(\zeta)^l$, again for suitable $l$ easily determined by the $e_{j,i}$ in $ \prod_{j = 0}^n \prod_{i = 1}^d \omega^{(j)}(\zeta_i)^{e_{j,i}}$. Thus the basis of the theorem above, suitably ordered, provides a nice upper-triangular and block diagonal representation of the Galois action that we expect to be useful in the further study of the arithmetic of the extensions $A_n / A$.

\subsubsection*{Acknowledgements} Both authors thank B. Angl\`es for several interesting remarks and questions related to this note.

\section{Generalities}
\subsection{Notations: $\zeta, \pfrak, d, F$}
Throughout we shall fix $\zeta$ in an algebraic closure $\FF_q^{ac} \subset \CC_\infty$ of $\FF_q$, and we let $\pfrak \in A$ be its minimal polynomial, and $d=\deg(\pfrak)$ its degree.
We write $\zeta_1 := \zeta$ and $\zeta_{i+1} = \zeta_i^q$, for $1 \leq i \leq d-1$; of course, then $\zeta_1, \dots, \zeta_d$ are all the roots of $\pfrak$.
At some places we will use the isomorphism $A/\pfrak A \rightarrow \FF_q(\zeta) \subset \CC_\infty$ given by $\theta \mapsto \zeta$.

Finally, we introduce the notation $F$ for the generator of the Galois group
\begin{equation}
\Gal(\FF_q(\zeta) / \FF_q) = \langle F \rangle,
\end{equation}
such that $F(\zeta) = \zeta^q$, and we shall abuse notation by writing $F$ for the generator of any extension $L(\zeta) / L$ isomorphic to $\FF_q(\zeta) / \FF_q$.

We will write $R^{n \times n}$ for the ring of $n\times n$ matrices with coefficients in the ring $R$. If we have an action of a group, etc... on $R$ we shall write $\sigma * T = (\sigma T_{ij})$ for $\sigma$ in the group and $T = (T_{ij}) \in R^{n \times n}$.

\subsection{The Tate algebra} \label{Tatealgstuff}
In this note, we deal exclusively with the {\it Tate algebra} $\TT$ over $\CC_\infty$ in one indeterminate $t$. We recall that
\[ \TT  = \left\{ \sum_{i \geq 0} c_i t^i \in \CC_\infty[[t]] : c_i \rightarrow 0 \text{ as } i \rightarrow \infty \right\} .\]

For elements $\phi = \sum c_j t^j \in \TT$, we define the {\it $i$-th Anderson twist} of $\phi$
\[\phi^{\tau^i} := \sum \tau^i(c_j) t^j= \sum c_j^{q^i} t^j,\]
and extend these $\CC_\infty$-linearly to the (non-commutative) twisted polynomial ring $\CC_\infty\{\tau\}$. We write
\[ \phi^{\ffrak} := \sum_{i \geq 0} \ffrak(c_i) t^i\]
for the action of $\ffrak \in \CC_\infty\{\tau\}$ on $\phi \in \TT$.

We also employ the $\CC_\infty$-algebra map $\mathcal{D} : \TT \rightarrow \TT[[X]]$
\[\phi(t) \mapsto \phi(t+X) := \sum_{n \geq 0} \phi^{(n)}(t) X^n \]
given by replacing the variable $t$ in the power series expansion for $\phi$ by $t+X$, expanding each $(t+X)^n$ using the binomial theorem, and rearranging to obtain a power series in $X$. One readily checks that the coefficients of this power series in $X$ are again in the Tate algebra.
The function $\phi^{(n)}$ is called the {\it $n$-th hyperderivative of $\phi$}. We also note that the maps $\cdot^{(n)} : \TT \rightarrow \TT$, which arise from the algebra map just defined, are $\CC_\infty$-linear and satisfy the Leibniz rule
\[ (\phi\psi)^{(n)} = \sum_{j = 0}^n \phi^{(j)} \psi^{(n-j)},\]
among other familiar (but suitably modified) calculus rules.

We obtain the following family (in $n \geq 1$) of faithful representations of $\CC_\infty$-algebras:
\begin{equation} \label{rhondef} \rho^{[n]} : \TT \rightarrow \TT^{n \times n}, \text{ defined by } \phi \mapsto \left( \begin{matrix} \phi & \phi^{(1)} & \cdots & \phi^{(n-1)} \\ 0 & \phi & \ddots & \vdots \\ \vdots & \ddots & \ddots & \phi^{(1)} \\ 0 & \cdots & 0 & \phi \end{matrix} \right),
\end{equation}
arising from the map $\mathcal{D}$ by evaluation of $X$ at the obvious $n \times n$ nilpotent matrix.

We have the $\FF_q$-algebra embedding
\[ A \xrightarrow{(\cdot)_t} \TT; \ \text{ determined by } \ (\theta)_t = t. \]
When not mentioning the variable $\theta$ we will simply write\footnote{Previous notations for $(\cdot)_t$ have been $\chi_t$ and $a \mapsto a(t)$. } $a_t \in \TT$ for the image of $a \in A$ under $(\cdot)_t$.
For $a \in A$, we may abuse notation by writing $a^{(n)}$ for $a_t^{(n)}$, as in the introduction.

\subsubsection{}

Elements of the Tate algebra may be evaluated at $z \in \CC_\infty$ such that $|z| \leq 1$, and we will write $\rho_z^{[n]} : \TT \rightarrow \CC_\infty^{n \times n}$ for the map
\begin{equation} \label{rhonevdef}
\phi \mapsto \rho^{[n]}(\phi)|_{t = z}. \end{equation}
Clearly, if $\phi(z) \neq 0$, then $\rho^{[n]}_z(\phi)$ is invertible.

Evaluation and twisting interact in the following very nice manner. Let $\pfrak \in A$, $d \geq 1$, and $\zeta_1, \dots, \zeta_d \in \FF_q^{ac}$, be as above.

\begin{lem}[{\bf Evaluation at roots of unity and twisting}] \label{twistevallem} \
\begin{enumerate}
\item For $\phi \in \TT$, there are unique $f_0,\dots, f_{d-1}\in \CC_\infty$ such that
for each $k = 1,\dots,d$, one has
\begin{equation} \phi|_{t = \zeta_k} = \sum_{l = 0}^{d-1} f_l \zeta_k^l.  \label{TTrtof1eval} \end{equation}
\item \label{item:coeffs} Let $\phi=\sum_{i=0}^\infty \phi_it^i\in \TT\subseteq \CC_\infty[[t]]$, and  for each $j = 0,1,\dots, q^d - 2$, write
\[\zeta^j =: \sum_{l = 0}^{d-1} a_{j,l} \zeta^l \in \FF_q[\zeta].\
\footnote{Here we see that the coefficients $a_{j,l}$ are independent of the choice of root $\zeta$ of $\pfrak$ through the action of the Frobenius, which fixes $\FF_q$.} \]
Then the coefficients $f_l$ from equation \eqref{TTrtof1eval} are given as
\[  f_l= \sum_{j = 0}^{q^d - 2}  a_{j,l} \sum_{i \equiv j\! \pmod{q^d - 1}} \phi_i. \]
\item For all $\ffrak \in \CC_\infty\{\tau\}$, with $\phi$ written as in \eqref{TTrtof1eval} above, we have
\[ \phi^{\ffrak}|_{t = \zeta_k} = \sum_{l = 0}^{d-1} \ffrak(f_l) \zeta_k^l. \]
\end{enumerate}
\end{lem}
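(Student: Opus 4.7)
The plan is to prove Parts 1 and 2 simultaneously by an explicit construction of the $f_l$, and then to deduce Part 3 by tracing how Frobenius twists act on that construction and extending $\CC_\infty$-linearly in $\ffrak$.

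For Parts 1 and 2, the starting point is that $\zeta_k \in \FF_{q^d}$, so---excluding the trivial case $\pfrak = \theta$, which I address at the end---one has $\zeta_k^{q^d - 1} = 1$. For $\phi = \sum_{i \geq 0} \phi_i t^i \in \TT$, I would group the evaluation at $t = \zeta_k$ according to the residue of $i$ modulo $q^d - 1$:
\[
\phi|_{t = \zeta_k} = \sum_{j = 0}^{q^d - 2} \Phi_j\, \zeta_k^j, \qquad \text{where } \Phi_j := \sum_{i \equiv j \!\pmod{q^d - 1}} \phi_i.
\]
The inner sum converges in $\CC_\infty$ because $\phi \in \TT$. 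I then substitute $\zeta_k^j = \sum_{l} a_{j,l} \zeta_k^l$; this is legitimate because the coefficients $a_{j,l} \in \FF_q$ are fixed by the Frobenius $F$, so applying $F^{k-1}$ to the defining relation $\zeta^j = \sum_l a_{j,l} \zeta^l$ gives the analogous identity for $\zeta_k$. Collecting the coefficient of $\zeta_k^l$ yields $f_l = \sum_j a_{j,l} \Phi_j$, which is exactly the formula of Part 2 and simultaneously establishes existence in Part 1. Uniqueness in Part 1 follows from the nonvanishing of the Vandermonde determinant of the distinct roots $\zeta_1, \dots, \zeta_d$.

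For Part 3, by the $\CC_\infty$-linearity of both sides in $\ffrak = \sum c_i \tau^i$, it suffices to treat the single twist $\ffrak = \tau^i$. Applying $\tau^i$ to $\phi$ replaces each coefficient $\phi_{i'}$ by $\phi_{i'}^{q^i}$, so the corresponding sums become $\widetilde\Phi_j = \sum_{i' \equiv j} \phi_{i'}^{q^i}$, which by continuity of $x \mapsto x^{q^i}$ on $\CC_\infty$ coincide with $\Phi_j^{q^i}$. Because $a_{j,l} \in \FF_q$ satisfies $a_{j,l}^{q^i} = a_{j,l}$, the Part 2 formula applied to $\phi^{\tau^i}$ gives $\widetilde f_l = \sum_j a_{j,l} \Phi_j^{q^i} = \bigl(\sum_j a_{j,l} \Phi_j\bigr)^{q^i} = \tau^i(f_l)$, which is the desired identity.

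The only genuine subtlety I foresee is this commutation of the Frobenius twist with the infinite sum defining $\Phi_j$: it is a routine consequence of $|\phi_{i'}| \to 0$ together with the isometry $|x^{q^i}| = |x|^{q^i}$ on $\CC_\infty$. The degenerate case $\pfrak = \theta$ (so $d = 1$ and $\zeta = 0$) is trivial: every $\phi \in \TT$ evaluates at $t = 0$ to its constant coefficient $\phi_0$, and all three statements collapse to tautologies.
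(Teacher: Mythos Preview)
Your proposal is correct and follows essentially the same approach as the paper: the paper also obtains uniqueness from the Vandermonde matrix in the $\zeta_k$, proves Part~2 by grouping the sum $\sum_i \phi_i \zeta_k^i$ according to residues modulo $q^d-1$ and then expanding $\zeta_k^j$ in the basis $1,\zeta_k,\dots,\zeta_k^{d-1}$, and deduces Part~3 from the explicit formula of Part~2 using that $\ffrak$ is $\FF_q$-linear and continuous. The only cosmetic differences are that the paper treats a general $\ffrak$ directly in Part~3 rather than reducing to $\tau^i$, and that you handle the degenerate case $\pfrak=\theta$ separately (a point the paper leaves implicit).
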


\begin{proof}
The condition $\phi(\zeta_k)= \phi|_{t = \zeta_k} = \sum_{l = 0}^{d-1} f_l \zeta_k^l$ for all $k = 1,\dots,d$ is equivalent to
\[ \begin{pmatrix}
\phi(\zeta_1) \\ \phi(\zeta_2) \\ \vdots \\ \phi(\zeta_d)
\end{pmatrix} =\begin{pmatrix} 1 & \zeta_1 & \zeta_1^2 & \cdots & \zeta_1^{d - 1} \\ 1 & \zeta_2 & \zeta_2^2 & \cdots & \zeta_2^{d - 1} \\ \vdots \\ 1 & \zeta_d & \zeta_d^2 & \cdots & \zeta_d^{d - 1} \end{pmatrix}  \begin{pmatrix} f_0\\ f_1\\ \vdots \\ f_{d-1} \end{pmatrix}. \]
The square $d \times d$ matrix is of Vandermonde type, hence invertible,
and we obtain unique existence of the $f_k$ by inverting this matrix.

For the second part, one computes for any $k\in \{1,\dots, d\}$ using $\zeta_k^{q^d-1}=1$ :
\[\phi(\zeta_k) =\sum_{i \geq 0} \phi_i \zeta_k^i
= \sum_{j = 0}^{q^d - 2} \zeta_k^j \sum_{i \equiv j \pmod{q^d - 1}} \phi_i = \sum_{l = 0}^{d-1} \zeta_k^l \sum_{j = 0}^{q^d - 2}  a_{j,l} \sum_{i \equiv j \pmod{q^d - 1}} \phi_i. \]
Since, the sums $\sum_{j = 0}^{q^d - 2}  a_{j,l} \sum_{i \equiv j \pmod{q^d - 1}} \phi_i$ are independent of $k$, uniqueness of the expression in \eqref{TTrtof1eval} yields the desired identity.

For proving the third part, let $g_l\in \CC_\infty$ such that
\[ \phi^{\ffrak}|_{t = \zeta_k}=\sum_{l = 0}^{d-1} g_l \zeta_k^l\] for all $k = 1,\dots,d$.
Since, $ \phi^{\ffrak}=\sum_{i \geq 0} \ffrak(\phi_i) t^i$, and
since $\ffrak$ is $\FF_q$-linear and continuous, we obtain from part \ref{item:coeffs}:
\[ g_l= \sum_{j = 0}^{q^d - 2}  a_{j,l} \sum_{i \equiv j \pmod{q^d - 1}} \ffrak(\phi_i)
=\ffrak(\sum_{j = 0}^{q^d - 2}  a_{j,l} \sum_{i \equiv j \pmod{q^d - 1}} \phi_i)=\ffrak(f_l).\]
\end{proof}

\subsubsection{} We introduce the following common multi-index notation for $\bm{e}=(e_1,\dots, e_d)$ and $\bm{f}=(f_1,\dots, f_d)$:
\begin{itemize}
\item $\bm{f}\leq \bm{e}$ means $f_i\leq e_i$ for all $i$, and  $\bm{f}< \bm{e}$ means $\bm{f}\leq \bm{e}$ but not equal.
\item Sums $\bm{f}+ \bm{e}$ and differences  $\bm{f}- \bm{e}$ are componentwise,
\item Binomial coefficients: $\binom{ \bm{e}}{\bm{f}}=\prod_{i=1}^d \binom{e_i}{f_i}$.
\item $\bm{0}=(0,\dots, 0)$ and $\bm{q-1}=(q-1,\dots, q-1)$.
\end{itemize}
Furthermore, with $\zeta_1,\dots,\zeta_d$, as above, for $\phi\in \TT$ and $\bm{e}=(e_1,\dots, e_d)$ we define
\[{\phi}^{\bm{e}}:=\prod_{i=1}^d \phi(\zeta_i)^{e_i}.\]

We notice that for ${\bm 0} \leq {\bm e} \leq \bm{q-1}$ the maps $a\mod \pfrak \mapsto a_t^{\bm e}$ are the monomial functions $A/\pfrak A\cong \FF_q(\zeta)\to \FF_q(\zeta),a(\zeta)\mapsto a(\zeta)^{\sum_{i=0}^{d-1} e_iq^i}$ under the identification of $A/\pfrak A$ with $ \FF_q(\zeta)$ by sending $\theta$ to $\zeta$. Using Lagrange interpolation for maps $\FF_q(\zeta)\to F$ resp.~$\FF_q(\zeta)^\times\to F$ into $\FF_q(\zeta)$-algebras $F$,
 we obtain the following lemma which we also refer to as \textit{Lagrange interpolation}.

\begin{lem}[{\bf Lagrange Interpolation}] \label{lemma:lagrange-interpolation} Let $F$ be an $\FF_q(\zeta)$-algebra.

Any map $f:A/\pfrak A\to F$ can uniquely
be written as
$$f(a)=\sum_{{\bm 0} \leq {\bm e} \leq \bm{q-1}} c_{{\bm e}} a_t^{\bm e}$$
with coefficients $c_{{\bm e}}\in F$.

Similarly, any map $f:(A/\pfrak A)^\times\to F$ can uniquely
be written as
$$f(a)=\sum_{{\bm 0} \leq {\bm e} < \bm{q-1}} c_{{\bm e}} a_t^{\bm e}$$
with coefficients $c_{{\bm e}}\in F$.

In both cases, the coefficients $c_{{\bm e}}\in F$ can be obtained as an $\FF_q(\zeta)$-linear combination of the values of $f$ .
\end{lem}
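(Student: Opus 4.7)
The plan is to reduce both claims to the classical fact that power functions are a basis for the ring of functions on a finite field, using the identification $A/\pfrak A \xrightarrow{\sim} \FF_q(\zeta)$, $\theta \mapsto \zeta$, recalled just above the lemma. The central translation step is: since $a \in A$ has $\FF_q$-coefficients, we have $a_t(\zeta_i) = a(\zeta_i) = a(\zeta)^{q^{i-1}}$, so
\[
a_t^{\bm e} \;=\; \prod_{i=1}^d a(\zeta)^{e_i q^{i-1}} \;=\; a(\zeta)^{k(\bm e)}, \qquad k(\bm e) := \sum_{i=1}^d e_i q^{i-1}.
\]
Base-$q$ expansion gives a bijection between $\{\bm e : \bm 0 \leq \bm e \leq \bm{q-1}\}$ and $\{0, 1, \dots, q^d-1\}$, and similarly between $\{\bm e : \bm 0 \leq \bm e < \bm{q-1}\}$ and $\{0, 1, \dots, q^d - 2\}$.

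With this translation in hand, the first claim becomes: the $q^d$ power functions $\alpha \mapsto \alpha^k$, $0 \leq k \leq q^d - 1$, form an $F$-basis of the free rank-$q^d$ module of all functions $\FF_q(\zeta) \to F$. I would establish this by noting that the Vandermonde matrix $V = (\alpha^k)_{\alpha \in \FF_q(\zeta),\; 0 \leq k \leq q^d - 1}$ has determinant (up to sign) equal to $\prod_{\alpha \neq \alpha'} (\alpha - \alpha')$, a product of nonzero elements of $\FF_q(\zeta)^\times$, and therefore a unit in the $\FF_q(\zeta)$-algebra $F$; invertibility of $V$ over $F$ yields both existence and uniqueness of the expansion.

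The second claim follows by the identical Vandermonde argument applied to the $q^d - 1$ power functions $\alpha \mapsto \alpha^k$, $0 \leq k \leq q^d - 2$, restricted to $\FF_q(\zeta)^\times$; the resulting square Vandermonde matrix over distinct elements of $\FF_q(\zeta)^\times$ is again nonsingular by the same determinant computation. Finally, since the entries of $V$ and its determinant both lie in $\FF_q(\zeta)$, so do the entries of $V^{-1}$; the coefficient tuple $(c_{\bm e})$ is therefore recovered from the value tuple $(f(a))_a$ by an $\FF_q(\zeta)$-linear map, giving the last sentence of the lemma. The only step that requires care is the translation $a_t^{\bm e} = a(\zeta)^{k(\bm e)}$; once that is in place, the rest is standard linear algebra over a finite field, and there is no real obstacle.
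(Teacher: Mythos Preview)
Your proposal is correct and follows exactly the route the paper indicates: the paper does not give a separate proof but simply notes, just before the lemma, that $a_t^{\bm e}$ is the monomial function $a(\zeta)\mapsto a(\zeta)^{\sum e_i q^{i-1}}$ under the identification $A/\pfrak A\cong \FF_q(\zeta)$ and then appeals to ordinary Lagrange interpolation for maps $\FF_q(\zeta)\to F$ (resp.\ $\FF_q(\zeta)^\times\to F$). Your Vandermonde argument is precisely a spelled-out version of that appeal, so there is no substantive difference.
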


\section{Results}

\subsection{The Anderson-Thakur function $\omega$: twisting and hyperdifferentiation} \label{omegasection}

We direct the reader back to the introduction where the Anderson-Thakur function $\omega$ was defined and discussed. From \eqref{omegadiffeq}, it follows that
\begin{equation} \label{omegacarlitztwisteq} \omega^{\cfrak_a} = a_t\omega,  \ \forall a \in A.
\end{equation}
Now, hyperdifferentiation and twisting commute. Thus, hyperdifferentiating and using the Leibniz rule, we obtain
\begin{align} \label{omeganfnleqn}
(\omega^{(n)})^{\cfrak_a} = (\omega^{\cfrak_a})^{(n)} = (a_t, a_t^{(1)}, \dots, a_t^{(n)} ) \cdot \left( \begin{matrix} \omega^{(n)} \\ \omega^{(n-1)} \\ \vdots \\ \omega  \end{matrix} \right ).  \end{align}
In fact, one may express \eqref{omeganfnleqn} more compactly with the representation $\rho^{[n+1]}$ of $\TT$ defined in \eqref{rhondef}.
From \eqref{omegacarlitztwisteq}, we obtain
\[ \cfrak_a * \rho^{[n+1]}(\omega) =\rho^{[n+1]}(\omega^{\cfrak_a}) = \rho^{[n+1]}(a_t\omega) = \rho^{[n+1]}(a_t)\rho^{[n+1]}(\omega).\]

Similarly, we obtain the following recursion for $n \geq 1$:
\begin{equation} \label{omeganrecurn}
\omega^{(n)}(t)^q = (t^q - \theta)\omega^{(n)}(t^q) + \omega^{(n-1)}(t^q).
\end{equation}

\subsection{Integrality of \boldmath$\omega^{(n)}(\zeta)$ and Galois action}

In the first part of the proof of \cite[Thm.~3.3]{APinv}, it is shown that $\omega^{(n)}(\zeta)$ is an element of $K_n(\zeta)$ and that it is integral over $A[\zeta]$. In the next result, we give an independent proof of this fact, and demonstrate one way these elements are akin to Gauss sums: they are $\FF_q(\zeta)$-linear combinations of Carlitz torsion.

\begin{prop}[{\bf Integrality of \boldmath$\omega^{(n)}(\zeta)$}] \label{prop:omega-n-integral}
As above, let $\zeta=\zeta_1,\zeta_2,\ldots, \zeta_d \in \FF_q^{ac} \subset \CC_\infty$ be the roots of the monic irreducible polynomial $\pfrak \in A$ of degree $d$.
Write
\begin{equation} \label{torsioncoeffseq}
\omega^{(n)}(\zeta) := \sum_{i = 0}^{d - 1} c_{(n),i} \zeta^{i}.
\end{equation}
as in \eqref{TTrtof1eval}.
We have $c_{(n),i} \in C[\pfrak^{n+1}]$, for all $i = 0,1,\dots, d - 1$. Hence,
\[\omega^{(n)}(\zeta) \in A_n[\zeta].\]
\end{prop}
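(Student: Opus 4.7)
My plan is to combine the Carlitz twisting identity \eqref{omeganfnleqn} with the evaluation-twisting lemma (Lemma \ref{twistevallem}) applied to the specific element $\ffrak = \cfrak_{\pfrak^{n+1}}$. The goal is to show that each coefficient $c_{(n),l}$ in the expansion $\omega^{(n)}(\zeta_k) = \sum_{l=0}^{d-1} c_{(n),l}\zeta_k^l$ is annihilated by $\cfrak_{\pfrak^{n+1}}$, i.e.\ lies in $C[\pfrak^{n+1}]$.

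First I would specialize \eqref{omeganfnleqn} to $a = \pfrak^{n+1}$, obtaining
\[ (\omega^{(n)})^{\cfrak_{\pfrak^{n+1}}} \;=\; \sum_{j=0}^{n} \bigl(\pfrak(t)^{n+1}\bigr)^{(j)} \cdot \omega^{(n-j)}. \]
The key observation is that $\pfrak(t) = \prod_{i=1}^d (t - \zeta_i)$ is separable with simple zeros at the $\zeta_i$, so $\pfrak(t)^{n+1}$ has a zero of exact order $n+1$ at each $\zeta_k$. Consequently every hyperderivative $\bigl(\pfrak(t)^{n+1}\bigr)^{(j)}$ with $0 \leq j \leq n$ vanishes at each $\zeta_k$. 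Thus $(\omega^{(n)})^{\cfrak_{\pfrak^{n+1}}}$ vanishes at $t = \zeta_k$ for every $k = 1, \dots, d$.

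Next I would apply Lemma \ref{twistevallem}(3) to the function $\phi := \omega^{(n)}$ with $\ffrak = \cfrak_{\pfrak^{n+1}}$. Writing $\omega^{(n)}|_{t=\zeta_k} = \sum_{l=0}^{d-1} c_{(n),l}\zeta_k^l$, the lemma gives
\[ (\omega^{(n)})^{\cfrak_{\pfrak^{n+1}}}\bigr|_{t=\zeta_k} \;=\; \sum_{l=0}^{d-1} \cfrak_{\pfrak^{n+1}}(c_{(n),l})\, \zeta_k^l \]
for each $k$. Since the left-hand side vanishes for all $k$, and since the uniqueness portion of Lemma \ref{twistevallem}(1) (coming from invertibility of the Vandermonde matrix on $\zeta_1,\dots,\zeta_d$) forces the coefficients to be zero, we conclude $\cfrak_{\pfrak^{n+1}}(c_{(n),l}) = 0$ for each $l$. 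Hence every $c_{(n),l}$ belongs to $C[\pfrak^{n+1}] \subset A_n$, and therefore $\omega^{(n)}(\zeta) = \sum_{l=0}^{d-1} c_{(n),l}\zeta^l \in A_n[\zeta]$ as claimed.

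The only subtle point I see is the multiplicity-of-zero argument at the roots of $\pfrak$; once one recognizes that the separability of $\pfrak$ lifts to an exact order-$(n+1)$ zero of $\pfrak(t)^{n+1}$ at each $\zeta_k$, so that hyperderivatives up to order $n$ all kill it simultaneously at every root, the rest of the argument is a clean combination of \eqref{omeganfnleqn} and the formal uniqueness built into Lemma \ref{twistevallem}. Nothing here requires any deep input from \cite{APinv}; the proof is genuinely independent, as the statement advertises.
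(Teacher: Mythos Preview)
Your proof is correct and follows essentially the same route as the paper's: both combine \eqref{omeganfnleqn} at $a=\pfrak^{n+1}$ with Lemma~\ref{twistevallem} and the Vandermonde uniqueness to force $\cfrak_{\pfrak^{n+1}}(c_{(n),l})=0$. The only difference is expository --- you spell out the multiplicity-of-zero argument for why each $(\pfrak_t^{n+1})^{(j)}(\zeta_k)$ vanishes when $j\le n$, which the paper leaves implicit.
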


\begin{proof}
From Lemma \ref{twistevallem} and \eqref{omeganfnleqn} above, we see that
\[\sum_{i = 0}^{d - 1} \cfrak_{\pfrak^{n+1}}(c_{(n),i}) {\zeta_k}^{i} = (\omega^{(n)})^{\cfrak_{\pfrak^{n+1}}}|_{t = \zeta_k} = 0,\]
for each $k = 1, \dots, d$.
By uniqueness of the coefficients, we conclude $\cfrak_{\pfrak^{n+1}}(c_{(n),i}) = 0$, for all $i = 0,1,\dots, d - 1$.
\end{proof}

For the next result, see also \cite[Proposition 3.6]{APinv}.
\begin{cor}[{\bf Galois action on \boldmath$\omega^{(n)}(\zeta)$}] \label{galoisactioncor}
For each $\sigma_a \in \Gal(K_n(\zeta) / K(\zeta))$, we have
\[ \sigma_a * \left( \begin{matrix} \omega^{(n)} \\ \omega^{(n-1)} \\ \vdots \\ \omega  \end{matrix} \right)_{t=\zeta} = \left( \begin{matrix} a_t  & a_t^{(1)} & \cdots & a_t^{(n)} \\ 0 & a_t & \ddots & \vdots \\ \vdots & \ddots & \ddots & a_t^{(1)} \\ 0 & \cdots & 0 & a_t \end{matrix} \right)_{t = \zeta} \left( \begin{matrix} \omega^{(n)} \\ \omega^{(n-1)} \\ \vdots \\ \omega  \end{matrix} \right)_{t=\zeta}. \]
This can also be expressed more succinctly in matrix form using $\rho_\zeta^{[n+1]}$ defined in~\eqref{rhonevdef},
\[\sigma_a * \rho_\zeta^{[n+1]}(\omega) = \rho_\zeta^{[n+1]}(a_t) \rho_\zeta^{[n+1]}(\omega), \quad \forall a \in (A/\pfrak^{n+1}A)^\times. \]
Further,
\[ F(\omega^{(n)}(\zeta_i)) = \omega^{(n)}(\zeta_{i+1}), \quad \forall i = 1,\dots,d-1.\]
\end{cor}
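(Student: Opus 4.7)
The plan is to reduce everything to Proposition \ref{prop:omega-n-integral} and Lemma \ref{twistevallem}, exploiting the fact that both the Galois element $\sigma_a$ and the Frobenius $F$ act on the expression $\omega^{(n)}(\zeta_k) = \sum_{l=0}^{d-1} c_{(n),l}\zeta_k^l$ in a way that commutes nicely with the ``twist then evaluate'' formalism.

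First I would handle the matrix identity. By Proposition \ref{prop:omega-n-integral}, $\omega^{(n)}(\zeta) \in A_n[\zeta]$ with coefficients $c_{(n),l} \in C[\pfrak^{n+1}]$, and the element $\sigma_a \in \Gal(K_n(\zeta)/K(\zeta))$ fixes $\zeta$ and acts on $c_{(n),l}$ as $\cfrak_a$. Hence $\sigma_a(\omega^{(n)}(\zeta)) = \sum_l \cfrak_a(c_{(n),l})\zeta^l$, which by Lemma \ref{twistevallem}(3) equals $(\omega^{(n)})^{\cfrak_a}|_{t=\zeta}$. At this point the $\CC_\infty$-algebra identity \eqref{omeganfnleqn} (equivalently $\rho^{[n+1]}(\omega^{\cfrak_a}) = \rho^{[n+1]}(a_t)\rho^{[n+1]}(\omega)$, established right after it) takes over: evaluating both sides at $t=\zeta$ gives exactly the claimed matrix identity $\sigma_a * \rho_\zeta^{[n+1]}(\omega) = \rho_\zeta^{[n+1]}(a_t)\rho_\zeta^{[n+1]}(\omega)$, and the expanded triangular-matrix form is simply reading off the components. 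I would be slightly careful here to justify that the components of $\sigma_a * \rho_\zeta^{[n+1]}(\omega)$ are $\sigma_a(\omega^{(j)}(\zeta))$ for $j=0,\dots,n$, which is immediate from the definition of the $*$ action on matrices.

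For the Frobenius statement, the key observation is that the coefficients $c_{(n),l}$ in the expression $\omega^{(n)}(\zeta_k) = \sum_l c_{(n),l}\zeta_k^l$ furnished by Lemma \ref{twistevallem}(1) are the same for every root $\zeta_k$, and they lie in $K_n \subset \CC_\infty$, hence are fixed by any automorphism of the constant-field extension. Applying $F$ (which fixes $K_n$ and sends $\zeta_i = \zeta^{q^{i-1}}$ to $\zeta_{i+1}$) to $\omega^{(n)}(\zeta_i) = \sum_l c_{(n),l}\zeta_i^l$ yields $\sum_l c_{(n),l}\zeta_{i+1}^l$, which is exactly $\omega^{(n)}(\zeta_{i+1})$ by another application of Lemma \ref{twistevallem}(1).

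The only conceptual step that requires attention --- and which I expect to be the most delicate piece of exposition rather than of mathematics --- is justifying the identification $\sigma_a(\omega^{(n)}(\zeta)) = (\omega^{(n)})^{\cfrak_a}|_{t=\zeta}$: one has to keep clearly in mind that the Galois action acts on the Carlitz-torsion coefficients $c_{(n),l}$ via $\cfrak_a$ while leaving $\zeta$ alone, so that the ``coefficient twisting'' by $\cfrak_a$ on the Tate algebra side (which acts on $\CC_\infty$-coefficients of the power series in $t$) matches the Galois action on the evaluation. Once this is in place, both assertions of the corollary fall out essentially for free.
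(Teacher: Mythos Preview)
Your proposal is correct and follows essentially the same route as the paper: write $\omega^{(n)}(\zeta)=\sum_l c_{(n),l}\zeta^l$ with $c_{(n),l}\in C[\pfrak^{n+1}]$, identify $\sigma_a(\omega^{(n)}(\zeta))$ with $(\omega^{(n)})^{\cfrak_a}|_{t=\zeta}$ via Lemma~\ref{twistevallem}, and then apply \eqref{omeganfnleqn}; for the Frobenius claim you, like the paper, observe that $F$ fixes the torsion coefficients $c_{(n),l}$ and permutes the $\zeta_i$.
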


\subsubsection{Remark}
Beware that $F(\omega^{(n)}(\zeta)) \neq \omega^{(n)}(\zeta)^q$. Indeed, by \eqref{omeganrecurn}, for the latter we obtain
\[ \omega^{(n)}(\zeta)^q = (\zeta^q - \theta)\omega^{(n)}(\zeta^q) + \omega^{(n-1)}(\zeta^q). \]
This relation can be seen as motivation for the need to use only exponents up to $q-1$ of $\omega^{(n)}(\zeta)$ in the digit derivative basis for $K_n(\zeta) / K(\zeta)$.

\begin{proof}[Proof of Corollary \ref{galoisactioncor}]
We have
\begin{equation} \label{omegagaloiscarlitzeq}
\sigma_a(\omega^{(n)}(\zeta)) = \sum_{i = 0}^{\deg (\pfrak) - 1} \sigma_a(c_{(n),i}) \zeta^{i} = \sum_{i = 0}^{\deg (\pfrak) - 1} \cfrak_a(c_{(n),i}) \zeta^{i}, \end{equation}
where the last equality follows since $c_{(n),i} \in C[\pfrak^{n+1}]$, for all $i = 0,1,\dots, \deg (\pfrak) - 1$. The right side of the previous displayed equation is nothing other than $(\omega^{(n)})^{\cfrak_a}|_{t = \zeta}$, and the result follows by applying \eqref{omeganfnleqn}.

The last claim $F(\omega^{(n)}(\zeta_i)) = \omega^{(n)}(\zeta_{i+1})$ follows since $F$ fixes $C[\pfrak^{n+1}]$, and hence all of the $c_{(n),i}$.
\end{proof}

\subsection{Explicit formulas for the coefficients $c_{(n),i}$ of $\omega^{(n)}(\zeta)$}

The question of using Carlitz $\pfrak^{n+1}$-torsion and roots of unity to explicitly write down non-zero elements of $K_n(\zeta)$ generating a subspace such that the Galois group of $K_n(\zeta) / K(\zeta)$ acts by some power of $\rho^{[n+1]}_\zeta$ leads us to the question to determine the coefficients $c_{(n),i}$ of $\omega^{(n)}(\zeta)$ in \eqref{torsioncoeffseq} explicitly.

The key ingredient for their computation is the identity
\begin{equation} \label{eq:omega-as-exp}
 \omega(t)= \sum_{m\geq 0} \frac{\pitilde^{q^m}}{D_m (\theta^{q^m}-t)},
\end{equation}
given in  \cite[Sect.~4]{FPannals} and relating the function $\omega$ to the Carlitz exponential, and the identities for $\omega^{(n)}(t)$ obtained by hyperdifferentiating this identity.

We start with the explicit description of the coefficients of $\omega(\zeta)$.

\begin{prop}
Let  $\pfrak=\sum_{j=0}^d a_j\theta^j\in A$.

The coefficients $c_{(0),i}$ from \eqref{torsioncoeffseq} are given by
\[ c_{(0),i}= \cfrak_{\qfrak_{(0),i}}\left( \exp_C(\pitilde/\pfrak)\right)= \exp_C\left(\pitilde \qfrak_{(0),i}/\pfrak\right), \]
where $\qfrak_{(0),i} =\sum_{j=i+1}^d a_j\theta^{j-i-1}\in A$.

In particular, $\qfrak_{(0),d-1}=a_d=1$, and $ c_{(0),d-1}= \exp_C(\frac{\pitilde}{\pfrak})$.
\end{prop}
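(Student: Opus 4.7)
The plan is to apply the identity \eqref{eq:omega-as-exp} with $t=\zeta$ and then manipulate each term $\frac{1}{\theta^{q^m}-\zeta}$ into a polynomial in $\zeta$ over $K$ using the relation $\mathfrak{p}(\zeta)=0$. After swapping the order of summation, the coefficient of $\zeta^i$ should visibly be a Carlitz exponential series.

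First, I would evaluate \eqref{eq:omega-as-exp} at $t=\zeta$ to obtain
\[ \omega(\zeta) = \sum_{m\geq 0} \frac{\pitilde^{q^m}}{D_m(\theta^{q^m}-\zeta)}. \]
The next step, which I expect to be the only nontrivial computation, is to expand $\frac{1}{\theta^{q^m}-\zeta}$ as an $A$-polynomial in $\zeta$ divided by $\mathfrak{p}^{q^m}$. Since the coefficients $a_j$ of $\mathfrak{p}$ lie in $\FF_q$, one has $\mathfrak{p}(\theta^{q^m})=\mathfrak{p}^{q^m}$, and since $\mathfrak{p}(\zeta)=0$, the standard factorization gives
\[ \frac{\mathfrak{p}^{q^m}}{\theta^{q^m}-\zeta} \;=\; \frac{\mathfrak{p}(\theta^{q^m})-\mathfrak{p}(\zeta)}{\theta^{q^m}-\zeta} \;=\; \sum_{j=1}^{d} a_j \sum_{k=0}^{j-1} \theta^{kq^m}\zeta^{j-1-k}. \]
Regrouping by powers of $\zeta$ (setting $i=j-1-k$, so $j$ runs from $i+1$ to $d$) and using once more that $a_j\in\FF_q$, the inner coefficient is $\sum_{j=i+1}^{d} a_j \theta^{(j-1-i)q^m}=\mathfrak{q}_{(0),i}^{q^m}$. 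Hence
\[ \frac{1}{\theta^{q^m}-\zeta} \;=\; \frac{1}{\mathfrak{p}^{q^m}} \sum_{i=0}^{d-1} \mathfrak{q}_{(0),i}^{q^m}\,\zeta^{i}. \]

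Substituting this into the expression for $\omega(\zeta)$ and swapping the finite sum over $i$ with the convergent sum over $m$, I would obtain
\[ \omega(\zeta) \;=\; \sum_{i=0}^{d-1} \zeta^i \sum_{m\geq 0} \frac{(\pitilde\,\mathfrak{q}_{(0),i}/\mathfrak{p})^{q^m}}{D_m} \;=\; \sum_{i=0}^{d-1} \exp_C\!\bigl(\pitilde\,\mathfrak{q}_{(0),i}/\mathfrak{p}\bigr)\,\zeta^{i}. \]
The uniqueness statement in Lemma \ref{twistevallem}(1) then forces $c_{(0),i}=\exp_C(\pitilde\,\mathfrak{q}_{(0),i}/\mathfrak{p})$, and the alternative form $c_{(0),i}=\mathfrak{c}_{\mathfrak{q}_{(0),i}}(\exp_C(\pitilde/\mathfrak{p}))$ is immediate from the functional equation $\exp_C(aX)=\mathfrak{c}_a(\exp_C(X))$ for $a\in A$. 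Finally, the case $i=d-1$ is read off from $\mathfrak{q}_{(0),d-1}=a_d=1$ since $\mathfrak{p}$ is monic. The only real subtlety will be the book-keeping in the change of summation index and verifying absolute convergence of the double sum in the Tate/$\CC_\infty$ topology, which is routine given the rapid growth of $|D_m|$ relative to $|\mathfrak{p}^{q^m}|$.
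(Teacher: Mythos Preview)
Your argument is correct and a bit more direct than the paper's. Both proofs rest on the same identity \eqref{eq:omega-as-exp} and on the difference-quotient expansion $\frac{\pfrak(X)-\pfrak(Y)}{X-Y}=\sum_{i}\qfrak_{(0),i}(X)\,Y^{i}$, but they organize the computation differently. The paper first packages the $c_{(0),i}$ into the Lagrange interpolation polynomial $c_{(0)}(T)=\sum_j \frac{\omega(\zeta_j)}{\pfrak'(\zeta_j)}\frac{\pfrak(T)}{T-\zeta_j}$, substitutes \eqref{eq:omega-as-exp}, and is led to the two-variable polynomial $\qfrak_{(0)}(\theta,T)$, which it then identifies with $\frac{\pfrak(\theta)-\pfrak(T)}{\theta-T}$ by checking values at $T=\zeta_k$. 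You instead plug $t=\zeta$ directly and exploit the Frobenius compatibility $\pfrak(\theta^{q^m})=\pfrak^{q^m}$ (valid because $a_j\in\FF_q$) to rewrite $\frac{1}{\theta^{q^m}-\zeta}$ immediately as $\pfrak^{-q^m}\sum_i \qfrak_{(0),i}^{\,q^m}\zeta^i$, bypassing the interpolation step. Your route is shorter for $n=0$; the paper's Lagrange set-up pays off in the next proposition, where the same framework handles $\omega^{(n)}$ uniformly via the interpolation polynomial $\qfrak_{(n)}(\theta,T)$ for $\zeta_j\mapsto \pfrak(\theta)^{n+1}/(\theta-\zeta_j)^{n+1}$, whereas your direct expansion would produce a polynomial in $\zeta$ of degree $(n{+}1)(d{-}1)$ that still has to be reduced modulo $\pfrak(\zeta)=0$.

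One minor point: to invoke the uniqueness in Lemma~\ref{twistevallem}(1) you need the displayed expression for $\omega(\zeta_k)$ at \emph{all} roots $\zeta_k$, not just $\zeta=\zeta_1$. This is immediate, since your computation uses only $\pfrak(\zeta)=0$ and $a_j\in\FF_q$, hence goes through verbatim with $\zeta$ replaced by any $\zeta_k$ and yields the same coefficients $\exp_C(\pitilde\,\qfrak_{(0),i}/\pfrak)$.
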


\begin{proof}
By construction, the $c_{(0),i}$ are the coefficients of the polynomial $c_{(0)}(T)\in \CC_\infty[T]$ of degree $<d$ satisfying
$c_{(0)}(\zeta_j)=\omega(\zeta_j)$ for all $1\leq j\leq d$. By classical Lagrange interpolation,

\[ c_{(0)}(T)=\sum_{j=1}^d \omega(\zeta_j) \prod_{k\ne j}\frac{T-\zeta_k}{\zeta_j-\zeta_k}
= \sum_{j=1}^d \frac{\omega(\zeta_j)}{\pfrak'(\zeta_j)}\cdot \frac{\pfrak(T)}{(T-\zeta_j)} \]

Hence by \eqref{eq:omega-as-exp},
\begin{eqnarray*}
c_{(0)}(T)
&=&  \sum_{m\geq 0} \frac{\pitilde^{q^m}}{D_m} \sum_{j=1}^d \frac{1}{\pfrak'(\zeta_j)(\theta^{q^m}-\zeta_j)} \frac{\pfrak(T)}{(T-\zeta_j)}\\
&=&  \sum_{m\geq 0} \frac{\pitilde^{q^m} \qfrak_{(0)}(\theta^{q^m},T)}{D_m \pfrak(\theta^{q^m})},
\end{eqnarray*}
where
\[ \qfrak_{(0)}(\theta,T):= \sum_{j=1}^d \frac{1}{\pfrak'(\zeta_j)}\frac{\pfrak(\theta)}{(\theta-\zeta_j)}
\frac{\pfrak(T)}{(T-\zeta_j)}\in A[T]. \]
It therefore remains to show that the coefficient $\qfrak_{(0),i}$ of $T^i$ in $\qfrak_{(0)}(\theta,T)$ is of the given form.

As for all $k=1,\ldots d$:
\[ \qfrak_{(0)}(\theta,\zeta_k)=\frac{\pfrak(\theta)}{\theta-\zeta_k}=\frac{\pfrak(\theta)-\pfrak(\zeta_k)}{\theta-\zeta_k}
=\frac{\pfrak(\theta)-\pfrak(T)}{\theta-T}|_{T=\zeta_k}, \]
and $\qfrak_{(0)}(\theta,T)$ as well as $\frac{\pfrak(\theta)-\pfrak(T)}{\theta-T}$ are polynomials in $T$ of degree $<d$, we have
\[ \qfrak_{(0)}(\theta,T)=\frac{\pfrak(\theta)-\pfrak(T)}{\theta-T}. \]
Writing $\pfrak=\sum_{j=0}^d a_j\theta^j$, we finally get
\begin{eqnarray*}
 \qfrak_{(0)}(\theta,T) &=& \frac{\sum_{j=0}^d a_j (\theta^j-T^j)}{\theta-T}
 = \sum_{j=0}^d a_j \sum_{i=0}^{j-1} \theta^{j-1-i}T^i \\
 &=& \sum_{i=0}^{d-1} \left( \sum_{j=i+1}^{d} a_j\theta^{j-1-i} \right) T^i.
\end{eqnarray*}
\end{proof}

\begin{prop}
For each $n \geq 1$, let
\[\qfrak_{(n)}(\theta,T):= \sum_{j=1}^d \frac{1}{\pfrak'(\zeta_j)}\frac{\pfrak(\theta)^{n+1}}{(\theta-\zeta_j)^{n+1}} \frac{\pfrak(T)}{(T-\zeta_j)} = \sum_{j \geq 0} \qfrak_{(n),j}T^j \in A[T] \]
be the unique polynomial of degree $<d$ in $T$ interpolating the map $\zeta_j \mapsto \frac{\pfrak(\theta)^{n+1}}{(\theta-\zeta_j)^{n+1}}$.

We have
 \[ c_{(n),i}=\cfrak_{\qfrak_{(n),i}}\left( \exp_C(\frac{\pitilde}{\pfrak^{n+1}})\right)= \exp_C\left(\frac{\pitilde \qfrak_{(n),i}}{\pfrak^{n+1}}\right). \]
\end{prop}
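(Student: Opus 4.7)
My plan is to follow exactly the strategy of the previous proposition, replacing the series \eqref{eq:omega-as-exp} for $\omega$ by its $n$-th hyperderivative. The two ingredients are (i) a closed-form series for $\omega^{(n)}(t)$ and (ii) classical Lagrange interpolation at the $\zeta_j$.

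The first step is to compute $\omega^{(n)}(t)$ in closed form. Hyperdifferentiating \eqref{eq:omega-as-exp} termwise in $t$ and using the identity $\sum_{k \geq 0} \binom{k+n}{n} X^k = (1-X)^{-(n+1)}$ gives
\[ \omega^{(n)}(t) = \sum_{m \geq 0} \frac{\pitilde^{q^m}}{D_m (\theta^{q^m} - t)^{n+1}}, \]
with termwise hyperdifferentiation justified by Tate algebra convergence on any disk $|t| \leq r < |\theta|$.

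Next, since $c_{(n)}(T)$ is by definition the polynomial of degree $<d$ interpolating $\zeta_j \mapsto \omega^{(n)}(\zeta_j)$, classical Lagrange interpolation combined with the displayed formula yields
\[ c_{(n)}(T) = \sum_{j=1}^d \frac{\omega^{(n)}(\zeta_j)}{\pfrak'(\zeta_j)}\cdot\frac{\pfrak(T)}{T-\zeta_j} = \sum_{m\geq 0} \frac{\pitilde^{q^m}\,\qfrak_{(n)}(\theta^{q^m},T)}{D_m\,\pfrak(\theta^{q^m})^{n+1}}, \]
after interchanging the finite $j$-sum with the infinite $m$-sum. Reading off the coefficient of $T^i$ gives
\[ c_{(n),i} = \sum_{m\geq 0} \frac{\pitilde^{q^m}\,\qfrak_{(n),i}(\theta^{q^m})}{D_m\,\pfrak(\theta^{q^m})^{n+1}}. \]
Since $\qfrak_{(n),i}$ and $\pfrak$ have coefficients in $\FF_q$, their values at $\theta^{q^m}$ equal their $q^m$-th powers, so the right-hand side rewrites as
\[ \sum_{m\geq 0} \frac{1}{D_m}\left(\frac{\pitilde\,\qfrak_{(n),i}}{\pfrak^{n+1}}\right)^{\!q^m} = \exp_C\!\left(\frac{\pitilde\,\qfrak_{(n),i}}{\pfrak^{n+1}}\right), \]
which is the second claimed equality. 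The first equality is then the standard functoriality $\cfrak_a\circ\exp_C = \exp_C\circ(\text{mult.\ by }a)$.

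The step I expect to be the most delicate is the assertion $\qfrak_{(n)}(\theta,T)\in A[T]$: \emph{a priori} the interpolation formula only gives coefficients in $\FF_q(\zeta)[\theta]$. The key observation is that each interpolation datum $\pfrak(\theta)^{n+1}/(\theta-\zeta_j)^{n+1} = \prod_{k\neq j}(\theta-\zeta_k)^{n+1}$ is already a polynomial in $\theta$, and the symmetrized Lagrange sum is invariant under the Frobenius permuting $\zeta_1,\dots,\zeta_d$, so the $T$-coefficients descend to $\FF_q[\theta]=A$. Unlike the $n=0$ case, I do not expect a clean closed form analogous to $(\pfrak(\theta)-\pfrak(T))/(\theta-T)$; fortunately none is needed for the present statement.
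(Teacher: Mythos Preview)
Your proposal is correct and follows essentially the same route as the paper's proof: hyperdifferentiate \eqref{eq:omega-as-exp} to obtain $\omega^{(n)}(t)=\sum_{m\geq 0}\pitilde^{q^m}/\bigl(D_m(\theta^{q^m}-t)^{n+1}\bigr)$, then combine with the Lagrange interpolation formula for $c_{(n)}(T)$ to produce $c_{(n)}(T)=\sum_{m\geq 0}\pitilde^{q^m}\qfrak_{(n)}(\theta^{q^m},T)/\bigl(D_m\,\pfrak(\theta^{q^m})^{n+1}\bigr)$. The paper stops there, leaving the extraction of the $T^i$-coefficient and the identification with $\exp_C(\pitilde\,\qfrak_{(n),i}/\pfrak^{n+1})$ implicit; you spell these steps out, and you also supply the Frobenius-invariance argument for $\qfrak_{(n)}(\theta,T)\in A[T]$, which the paper asserts in the statement but does not justify in the proof.
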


\begin{proof}
The proof is similar to the case $n=0$.

The $c_{(n),i}$ are the coefficients of the unique polynomial $c_{(n)}(T)\in \CC_\infty[T]$ satisfying
$\deg_T(c_{(n)}(T)) < d$ and $c_{(n)}(\zeta_j)=\omega^{(n)}(\zeta_j)$, for all $1\leq j\leq d$. This gives
\[ c_{(n)}(T)=\sum_{j=1}^d \omega^{(n)}(\zeta_j) \prod_{k\ne j}\frac{T-\zeta_k}{\zeta_j-\zeta_k}
= \sum_{j=1}^d \frac{\omega^{(n)}(\zeta_j)}{\pfrak'(\zeta_j)}\frac{\pfrak(T)}{(T-\zeta_j)}. \]
By hyperdifferentiating \eqref{eq:omega-as-exp}, one has
\[  \omega^{(n)}(t)= \sum_{m\geq 0} \frac{\pitilde^{q^m}}{D_m (\theta^{q^m}-t)^{n+1}}, \]
and hence, by an analogous computation to the case $n=0$ above,
\[  c_{(n)}(T) = \sum_{m\geq 0} \frac{\pitilde^{q^m}}{D_m \pfrak(\theta^{q^m})^{n+1}} \qfrak_{(n)}(\theta^{q^m},T). \]

\end{proof}

\begin{rem}
Notice that for all $k=1,\ldots d$:
\[ \qfrak_{(n)}(\theta,\zeta_k)= \frac{\pfrak(\theta)^{n+1}}{(\theta-\zeta_k)^{n+1}}=\frac{(\pfrak(\theta)-\pfrak(\zeta_k))^{n+1}}{(\theta-\zeta_k)^{n+1}}
=\left( \frac{\pfrak(\theta)-\pfrak(T)}{\theta-T}\right)^{n+1}|_{T=\zeta_k}. \]
Hence, $ \qfrak_{(n)}(\theta,T)$ and $\left( \frac{\pfrak(\theta)-\pfrak(T)}{\theta-T}\right)^{n+1}$ have to be congruent modulo $\pfrak(T)$, giving a connection to the $n = 0$ case.
\end{rem}

\begin{rem}
It appears difficult to obtain the Galois action on $\omega^{(n)}(\zeta) = \sum_{j = 0}^{d-1} c_{(n),j} \zeta^j$ directly from the explicit description of the $c_{(n),j}$ given above. Nevertheless, such an explicit description of these coefficients can be useful for implementing these objects on the computer.
\end{rem}

\subsection{Matrix $L$-values} \label{matrixlvalssection}

Another way to obtain explicit elements using Carlitz $\pfrak^{n+1}$-torsion and roots of unity so that the Galois action on them is given by some power of $\rho^{[n+1]}_\zeta$ can be made through the connection between $\omega$ and a special $L$-value made by Pellarin \cite[Theorem 1]{FPannals}, namely the identity
\begin{equation} \label{pellarinLid} \frac{-1}{\pitilde} (t - \theta) L(\chi_t,1) \omega(t)  = 1. \end{equation}

Let
\[ L(\chi_t,1) := \sum_{\substack{a \in A \\ a \text{ monic}}} \frac{a_t}{a} = \prod_{\substack{\qfrak \in A \\ \qfrak \text{ monic,irred}}} (1 - \frac{\qfrak_t}{\qfrak})^{-1} \in \TT \]
be the special $L$-value of Pellarin.
Applying $\rho^{[n+1]}$ to $L(\chi_t,1)$, we observe that it respects both the $A$-harmonic sum and Euler product. So $\rho^{[n+1]}(L(\chi_t,1))$ is a kind-of matrix $L$-value; see \cite[\S 2.2.2]{FPreps} where more general such $L$-values have already been observed.

Fixing $n$ and $\zeta$, we let
\[ {\bm L}^{[n+1]}_\zeta := \frac{\pfrak^{n+1}}{\pitilde} \rho_\zeta^{[n+1]}(L(\chi_t,1)) \in \GL_{n+1}(K_n(\zeta)).\]
The explicit entries of $\frac{\pitilde}{\pfrak^{n+1}} {\bm L}^{[n+1]}_\zeta$ are the elements
\[L^{(j)}_\zeta  := \sum_{a \in A_+} \frac{a_t^{(j)}(\zeta)}{a},\]
which in the phraseology of Anderson are a kind-of ``twisted $A$-harmonic series.''

\begin{prop}[{\bf {L}-matrix}]
All entries of ${\bm L}^{[n+1]}_\zeta$ are integral, except possibly $\frac{\pfrak^{n+1}}{\pitilde} L^{(n)}_\zeta$, for which one has $\frac{\pfrak^{n+2}}{\pitilde} L^{(n)}_\zeta \in A_n[\zeta]$.

Further, we have
\[ {\bm L}^{[n+1]}_\zeta = -\sideset{}{'}\sum_{ a \in A/\pfrak^{n+1}A } \rho^{[n+1]}_\zeta(a_t)\frac{1}{\exp_C(\frac{\pitilde a}{\pfrak^{n+1}})}. \]
Hence,
\[\sigma_a * {\bm L}^{[n+1]}_\zeta = \rho_\zeta^{[n+1]}(a_t)^{-1} {\bm L}^{[n+1]}_\zeta, \quad \forall a \in (A/\pfrak^{n+1} A)^\times. \]
\end{prop}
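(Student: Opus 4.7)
The heart of the proof is the sum formula; the Galois-action law follows directly from it, and the integrality claims reduce to a valuation analysis via Pellarin's identity.

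To prove the sum formula, I would partition $L(\chi_t, 1) = \sum_{a \in A_+} a_t/a$ by residue class of $a$ modulo $\pfrak^{n+1}$. The key input is $\rho^{[n+1]}_\zeta(\pfrak_t^{n+1}) = 0$: indeed $\pfrak_t \in \FF_q[t]$ has a simple zero at $t = \zeta$, so the iterated Leibniz rule forces $(\pfrak_t^{n+1})^{(k)}(\zeta) = 0$ for $0 \leq k \leq n$. Hence $\rho^{[n+1]}_\zeta(a_t) = \rho^{[n+1]}_\zeta(b_t)$ whenever $a \equiv b \pmod{\pfrak^{n+1}}$, and after justifying the rearrangement by a non-Archimedean estimate in $\TT$ one obtains
\[\rho^{[n+1]}_\zeta(L(\chi_t, 1)) = \sum_{\bar b \in A/\pfrak^{n+1}A} S(b)\, \rho^{[n+1]}_\zeta(b_t), \quad S(b) := \!\!\!\sum_{\substack{a \in A_+\\ a \equiv b \pmod{\pfrak^{n+1}}}}\!\!\!\tfrac{1}{a}.\]
The classical partial fraction expansion $1/\exp_C(z) = \sum_{a \in A} 1/(z - a\pitilde)$ evaluated at $z = \pitilde b/\pfrak^{n+1}$ gives $\sum_{a \in A,\, a \equiv b} 1/a = \pitilde/(\pfrak^{n+1}\exp_C(\pitilde b/\pfrak^{n+1}))$ for $\bar b \neq 0$. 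Converting this sum over $A$ to the monic sum via the $\FF_q^*$-action on representatives, using $\exp_C(\pitilde \gamma b/\pfrak^{n+1}) = \gamma \exp_C(\pitilde b/\pfrak^{n+1})$ and the characteristic-$p$ identity $q-1 \equiv -1 \pmod{p}$, and then regrouping by $\FF_q^*$-orbits, yields the claimed sum formula after multiplying by $\pfrak^{n+1}/\pitilde$.

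The Galois action follows at once: applying $\sigma_a$ entrywise, the numerators $\rho^{[n+1]}_\zeta(b_t)$ have entries in $\FF_q(\zeta)$ and are Galois-fixed, while $\sigma_a(\exp_C(\pitilde b/\pfrak^{n+1})) = \exp_C(\pitilde a b/\pfrak^{n+1})$. Substituting $c = ab$ — a bijection of nonzero residues since $a \in (A/\pfrak^{n+1}A)^\times$ — and using $\rho^{[n+1]}_\zeta((a^{-1})_t) = \rho^{[n+1]}_\zeta(a_t)^{-1}$, which follows from $\rho^{[n+1]}_\zeta(\pfrak_t^{n+1}) = 0$ applied to the product $a\cdot a^{-1} \equiv 1 \bmod \pfrak^{n+1}$, one pulls out a single factor of $\rho^{[n+1]}_\zeta(a_t)^{-1}$ to conclude $\sigma_a * {\bm L}^{[n+1]}_\zeta = \rho^{[n+1]}_\zeta(a_t)^{-1} {\bm L}^{[n+1]}_\zeta$.

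The hardest part will be the integrality, since the individual terms of the sum formula carry denominators coming from Carlitz torsion. For this I would return to Pellarin's identity in matrix form, which after applying $\rho^{[n+1]}$, evaluating at $\zeta$, and clearing $\pitilde$ gives
\[{\bm L}^{[n+1]}_\zeta \cdot \rho^{[n+1]}_\zeta(\omega) = -\pfrak^{n+1}\rho^{[n+1]}_\zeta\!\left((t-\theta)^{-1}\right).\]
Using $((t-\theta)^{-1})^{(k)} = (-1)^k/(t-\theta)^{k+1}$ and the fact that $(\theta - \zeta)$ divides $\pfrak$ in $A[\zeta]$, the right-hand side has entries in $A[\zeta]$. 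Reading off the $(1, m+1)$-entry produces the recursion
\[\sum_{l=0}^{m} \tfrac{\pfrak^{n+1}}{\pitilde} L^{(l)}_\zeta\, \omega^{(m-l)}(\zeta) = \tfrac{(-1)^{m+1}\pfrak^{n+1}}{(\zeta-\theta)^{m+1}} \in A[\zeta], \quad m = 0, \ldots, n.\]
Combined with $\omega^{(j)}(\zeta) \in A_n[\zeta]$ from Proposition \ref{prop:omega-n-integral} and the fact that $\omega(\zeta)$ is a uniformizer of $A_0[\zeta]$ at the unique prime above $\pfrak$ (via the Angl\`es-Pellarin interpretation of $\omega(\zeta)$ as a unit times a Gauss-Thakur sum, up to the factor $\pfrak'(\zeta)^{-1}$), a step-by-step valuation count establishes integrality of $\tfrac{\pfrak^{n+1}}{\pitilde}L^{(m)}_\zeta$ for $m < n$, the slack coming from the factor $\pfrak^{n+1}/(\zeta-\theta)^{m+1}$ in the numerator. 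At $m = n$, however, that factor has valuation exactly $0$ at the prime above $\pfrak$, so the single $\omega(\zeta)^{-1}$ introduced at the final inductive step leaves an unavoidable pole of order one, which one additional factor of $\pfrak$ then clears — yielding $\tfrac{\pfrak^{n+2}}{\pitilde}L^{(n)}_\zeta \in A_n[\zeta]$.
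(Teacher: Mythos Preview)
Your derivation of the finite-sum formula and of the Galois action is essentially the paper's argument; the paper phrases the passage from monic $a$ to all nonzero $a$ via the $\FF_q$-linearity of $a\mapsto a_t^{(j)}$, which is your $\FF_q^\times$-orbit trick in disguise.

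For integrality you take a genuinely different route, and your sketch has a real gap. First, for $d=\deg\pfrak>1$ there is no ``unique prime above $\pfrak$'' in $A_0[\zeta]$: the prime $\pfrak$ already splits in $K(\zeta)$ into $(\theta-\zeta_1),\dots,(\theta-\zeta_d)$, and $\omega(\zeta)=\omega(\zeta_1)$ is a uniformizer only above $(\theta-\zeta_1)$; above $(\theta-\zeta_i)$ with $i\ge 2$ one has $v_i(\omega(\zeta))=q^{d-i+1}/(|\pfrak|-1)$ by Theorem~\ref{thm:valuations-of-omega-n}. Your ``slack'' heuristic therefore has to be run separately at each $v_i$, and for $i\ge 2$ the term of smallest valuation in the recursion is no longer $r_m=\pfrak^{n+1}/(\theta-\zeta)^{m+1}$ (whose $v_i$-valuation is the constant $n+1$) but the inductive term $\ell_{m-1}\,\omega^{(1)}(\zeta)$. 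The bookkeeping can be pushed through, but it is considerably more delicate than the one line you allot to it.

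The paper sidesteps all of this by a sharper collapsing. Since $(\pfrak_t^{j+1})^{(k)}(\zeta)=0$ for $k\le j$, one may collapse the sum for $L^{(j)}_\zeta$ modulo $\pfrak^{j+1}$ rather than $\pfrak^{n+1}$, obtaining
\[
-\frac{\pfrak^{j+1}}{\pitilde}L^{(j)}_\zeta=\sum_{0\ne a\in A/\pfrak^{j+1}A}\frac{a_t^{(j)}(\zeta)}{\exp_C(\pitilde a/\pfrak^{j+1})}\in K_j(\zeta).
\]
Then one simply uses that $\pfrak/\exp_C(\pitilde a/\pfrak^{j+1})$ is integral for every nonzero residue $a$, giving $\frac{\pfrak^{j+2}}{\pitilde}L^{(j)}_\zeta\in A_j[\zeta]$ for all $j$. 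For $j<n$ this already yields
\[
\frac{\pfrak^{n+1}}{\pitilde}L^{(j)}_\zeta=\pfrak^{\,n-j-1}\cdot\frac{\pfrak^{j+2}}{\pitilde}L^{(j)}_\zeta\in A_j[\zeta]\subset A_n[\zeta],
\]
with no valuation induction and no appeal to Pellarin's identity. What your approach would buy, if carried out, is the exact $v_i$-valuation of each entry of ${\bm L}^{[n+1]}_\zeta$, which the paper's argument does not provide.
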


\begin{proof}
Since the hyperderivatives $\cdot^{(j)}$ are $\FF_q$-linear on $A$, the sum defining $L^{(j)}_\zeta$ can be extended to one over all non-zero elements of $A$ up to a minus sign:
\[ L^{(j)}_\zeta = -\sideset{}{'}\sum_{ a \in A} \frac{a_t^{(j)}(\zeta)}{a}.\]
Now, for each $j = 0,1,\dots,n$ and $a \in \pfrak^{n+1}A$, we have
\[ a_t^{(j)}(\zeta) = 0. \]
Thus, ``collapsing the sum'' we obtain
\begin{align*} L^{(j)}_\zeta &= -\sum_{0 \neq a \in A/\pfrak^{n+1}A} a_t^{(j)}(\zeta) \sum_{b \in A} \frac{1}{a+b\pfrak^{n+1}} \\
&= -\frac{\pitilde}{\pfrak^{n+1}} \sum_{0 \neq a \in A/\pfrak^{n+1}A} \frac{a_t^{(j)}(\zeta)}{\exp_C(\frac{\pitilde a}{\pfrak^{n+1}})}. \end{align*}
By these calculations, we deduce the desired matrix identity
\[ {\bm L}^{[n+1]}_\zeta = -\sideset{}{'}\sum_{ a \in A/\pfrak^{n+1}A } \rho^{[n+1]}_\zeta(a_t)\frac{1}{\exp_C(\frac{\pitilde a}{\pfrak^{n+1}})}. \]

From the case $j = n$, we obtain
\begin{equation} \label{Ljfinitesumeq}
-\frac{\pfrak^{j+1}}{\pitilde} L^{(j)}_\zeta = \sum_{0 \neq a \in A/\pfrak^{j+1}A} \frac{a^{(j)}(\zeta)}{\exp_C(\frac{\pitilde a}{\pfrak^{j+1}})} \in K_j(\zeta), \end{equation}
demonstrating that these elements lie in successively smaller extensions.
Indeed, from \eqref{Ljfinitesumeq}, we observe that $\frac{\pfrak^{j+2}}{\pitilde} L^{(j)}_\zeta \in A_j[\zeta]$, for all $j$ (since $\frac{\pfrak}{\exp_C(\frac{\pitilde a}{\pfrak^{n+1}})}$ is always integral), and hence all entries of ${\bm L}^{[n+1]}_\zeta$ are integral, except possibly the top entry $\frac{\pfrak^{n+1}}{\pitilde} L^{(n)}_\zeta$.
\end{proof}

\subsubsection{Remark}
As B. Angl\`es points out, for applications to understanding Taelman's class and unit modules for $A_n$, one should consider the family of special values given by
\[ \sum_{a \in A_+} \frac{\prod_{j = 0}^n a_t^{(j)}(\zeta)^{e_j}}{a}, \quad 0 \leq e_j \leq |\pfrak| - 2. \]
Perhaps these may be easily related to Taelman units for $A_n$ via the observations of \cite[\S 2.2.3]{FPreps} and the formalism of evaluation at characters of \cite[\S 9]{APTR}.
We hope to return to these investigations in a future work.

\subsection{Valuations of \boldmath$\omega^{(n)}(\zeta)$ at primes above \boldmath$\pfrak$}

As $\zeta$ is a root of $\pfrak$, the prime $(\pfrak)$ splits completely in $K(\zeta)$ into the primes
$(\theta-\zeta_i)$ for $i=1,\dots, d$, where as before $\zeta_i=\zeta^{q^{i-1}}$.
The valuations corresponding to $(\theta-\zeta_i)$ will be denoted by $v_i$.
As these primes ramify totally in the extension $K_n(\zeta) / K(\zeta)$, we will also denote by $v_i$, the $\QQ$-valued extension of these valuations to $K_n(\zeta)$.

The following result provides the valuations of the $\omega^{(n)}(\zeta)$ at these primes.
Another demonstration of this result can be found in the proof of \cite[Thm.~3.3]{APinv} where this fact is also stated (cf.~equation (23) ibid.).

\begin{thm}\label{thm:valuations-of-omega-n}
 For all $n\geq 0$, $1\leq i\leq d$ and $0\leq j\leq d-1$ we have
\[ v_i\left( \omega^{(n)}(\zeta_i^{q^j})\right) =\frac{q^j}{|\pfrak|^n(|\pfrak|-1)}.
\]
\end{thm}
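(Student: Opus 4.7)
The plan is to argue by a double induction: an outer induction on $n$ driven by the $q$-power recursion for $\omega^{(n)}$, and an inner induction on $j$ from $0$ up to $d-1$ that propagates the valuation across the Frobenius orbit of $\zeta_i$. The inner induction reduces everything to the base case $j=0$ for every $n$, which I would handle by an explicit residue-field computation.

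\textbf{Base case ($j=0$).} I would set $\pi_n := \exp_C(\pitilde/\pfrak^{n+1})$, which is a uniformizer at each $v_i$ with $v_i(\pi_n) = 1/(|\pfrak|^n(|\pfrak|-1))$. Proposition~\ref{prop:omega-n-integral} together with the explicit formula for $c_{(n),k}$ established just above gives
\[\omega^{(n)}(\zeta_i) = \sum_{k=0}^{d-1} c_{(n),k}\,\zeta_i^{k}, \qquad c_{(n),k} = \cfrak_{\qfrak_{(n),k}}(\pi_n).\]
Since $\cfrak_{\qfrak_{(n),k}}(\pi_n) = \qfrak_{(n),k}(\theta)\pi_n + O(\pi_n^q)$ in the local ring $\mathcal{O}_{v_i}$ (where the higher-order terms have strictly larger $v_i$-valuation), the image of $c_{(n),k}/\pi_n$ in the residue field of $v_i$ is $\qfrak_{(n),k}(\zeta_i)$. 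Summing over $k$,
\[\omega^{(n)}(\zeta_i)/\pi_n \;\equiv\; \sum_{k=0}^{d-1}\qfrak_{(n),k}(\zeta_i)\,\zeta_i^{k} \;=\; \qfrak_{(n)}(\zeta_i,\zeta_i) \pmod{\mathfrak{m}_{v_i}}.\]
The congruence $\qfrak_{(n)}(\theta,T) \equiv \bigl(\tfrac{\pfrak(\theta)-\pfrak(T)}{\theta-T}\bigr)^{n+1} \pmod{\pfrak(T)}$ from the Remark above, combined with the limit $\theta,T\to\zeta_i$, identifies this residue as $\pfrak'(\zeta_i)^{n+1}$, which is nonzero by separability of $\pfrak$. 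Hence $\omega^{(n)}(\zeta_i)/\pi_n$ is a $v_i$-unit, yielding $v_i(\omega^{(n)}(\zeta_i)) = v_i(\pi_n) = 1/(|\pfrak|^n(|\pfrak|-1))$ as required.

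\textbf{Propagation to $1 \leq j \leq d-1$.} I would evaluate \eqref{omeganrecurn} (or the simpler identity $\omega(t)^q = (t^q-\theta)\omega(t^q)$ when $n=0$) at $t = \zeta_i^{q^{j-1}}$, so that $t^q = \zeta_i^{q^j}$:
\[\omega^{(n)}(\zeta_i^{q^{j-1}})^q = (\zeta_i^{q^j}-\theta)\,\omega^{(n)}(\zeta_i^{q^j}) + \omega^{(n-1)}(\zeta_i^{q^j}).\]
For $1 \leq j \leq d-1$ the element $\zeta_i^{q^j}$ is a root of $\pfrak$ distinct from $\zeta_i$, so $v_i(\zeta_i^{q^j}-\theta)=0$. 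By inner induction on $j$ the LHS has $v_i$-valuation $q\cdot q^{j-1}/(|\pfrak|^n(|\pfrak|-1)) = q^j/(|\pfrak|^n(|\pfrak|-1))$, while the outer induction hypothesis on $n$ gives $v_i(\omega^{(n-1)}(\zeta_i^{q^j})) = q^j/(|\pfrak|^{n-1}(|\pfrak|-1))$, which is strictly $|\pfrak|$ times larger. The ultrametric triangle inequality then forces $v_i(\omega^{(n)}(\zeta_i^{q^j})) = q^j/(|\pfrak|^n(|\pfrak|-1))$, closing the induction.

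The main obstacle is the base case $j=0$: one must rule out cancellation in $\sum_k \qfrak_{(n),k}(\zeta_i)\zeta_i^k$, since a priori the individual $c_{(n),k}\zeta_i^k$ could combine to give something of strictly larger $v_i$-valuation than $v_i(\pi_n)$. Invoking the congruence for $\qfrak_{(n)}(\theta,T) \pmod{\pfrak(T)}$ is exactly what pins this sum down to the closed form $\pfrak'(\zeta_i)^{n+1}$ and makes its non-vanishing immediate from separability of $\pfrak$.
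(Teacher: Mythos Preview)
Your argument is correct and takes a genuinely different route from the paper's. The paper iterates the twist equation $d$ times to bring in the auxiliary product $\beta(t)=\prod_{h=0}^{d-1}(t-\theta^{q^h})$, so that $\omega^{(n)}(\zeta_i^{q^j})$ satisfies $X^{|\pfrak|}-\beta(\zeta_i^{q^j})X-\xi_n(\zeta_i^{q^j})=0$ over $K_{n-1}(\zeta)$ (respectively $X^{|\pfrak|-1}-\beta(\zeta_i^{q^j})=0$ when $n=0$), and then reads off the valuation from the single-slope Newton polygon; this treats all $j$ at once and, as a bonus, exhibits the minimal polynomial of $\omega^{(n)}(\zeta_i^{q^j})$ over $K_{n-1}(\zeta)$, which is exploited in the Remark following the theorem. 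Your approach instead pins down the anchor $j=0$ via the explicit description of the torsion coefficients $c_{(n),k}=\cfrak_{\qfrak_{(n),k}}(\pi_n)$ and the interpolation identity $\qfrak_{(n)}(\theta,\zeta_i)=\bigl(\pfrak(\theta)/(\theta-\zeta_i)\bigr)^{n+1}$, giving the clean residue $\omega^{(n)}(\zeta_i)/\pi_n\equiv\pfrak'(\zeta_i)^{n+1}$; you then walk one step at a time along the Frobenius orbit using \eqref{omeganrecurn} and the ultrametric inequality. This avoids Newton polygons entirely and is arguably more transparent about \emph{why} no cancellation occurs at $j=0$, but it imports the machinery of the $\qfrak_{(n)}$-propositions and does not, by itself, deliver the minimal polynomials.
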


\begin{proof}
We do this by induction on $n$, following the lines of \cite[Proposition 2.1]{APinv}.
Using the equation $\omega^\tau=(t-\theta)\omega$, one easily obtains that
$\omega(\zeta_i^{q^j})$ is a root of the polynomial equation
\[ X^{|\pfrak|-1}-\beta(\zeta_i^{q^j}), \]
where $\beta=\prod_{h=0}^{d-1} (t-\theta^{q^h})\in \TT$.
Since
\[ v_i\left(\beta(\zeta_i^{q^j})\right)=\sum_{h=0}^{d-1} v_i\left(\zeta_i^{q^j}-\theta^{q^h}\right)= q^j,\] the Newton polygon of the above equation has exactly on slope,
namely $\frac{q^j}{|\pfrak|-1}$. Hence, the valuation of $\omega(\zeta_i^{q^j})$ is the desired one.

Similarly for $n>0$, one obtains that $\omega^{(n)}(\zeta_i^{q^j})$ is a root of the polynomial
\[ X^{|\pfrak|}-\beta(\zeta_i^{q^j})X-\xi_n(\zeta_i^{q^j})\in K_{n-1}(\zeta)[X], \]
 where
\[ \xi_n(t)= \sum_{l=1}^n \beta^{(l)}\omega^{(n-l)}. \]
Now, $\beta^{(1)}(\zeta_i^{q^j})=\sum_{h=0}^{d-1} \prod_{h'\ne h} \left(\zeta_i^{q^j}-\theta^{q^{h'}}\right)$ has valuation $0$  and
\[ v_i\left( \omega^{(n-1)}(\zeta_i^{q^j})\right)=\frac{q^j}{|\pfrak|^{n-1}(|\pfrak|-1)}< v_i\left( \omega^{(n-l)}(\zeta_i^{q^j})\right) \]
for all $l\geq 2$ by induction hypothesis. Hence,
\[ v_i\left(\xi_n(\zeta_i^{q^j})  \right)=v_i\left( \beta^{(1)}(\zeta_i^{q^j})\omega^{(n-1)}(\zeta_i^{q^j})\right)
= \frac{q^j}{|\pfrak|^{n-1}(|\pfrak|-1)}. \]
Therefore, using again $v_i\left(\beta(\zeta_i^{q^j})\right)=q^j$, the Newton polygon has exactly one slope, and this is
$\frac{1}{|\pfrak|}\cdot  \frac{q^j}{|\pfrak|^{n-1}(|\pfrak|-1)}= \frac{q^j}{|\pfrak|^{n}(|\pfrak|-1)},$
 giving the desired valuation for $\omega^{(n)}(\zeta_i^{q^j})$.
 \end{proof}

\begin{rem}
The proof of the previous theorem even shows more than the stated fact. Indeed,  the occurring polynomials for $\omega(\zeta_i^{q^j})$ over $K(\zeta)$ and for $\omega^{(n)}(\zeta_i^{q^j})$ over $K_{n-1}(\zeta)$ ($n\geq 1$) are irreducible, since their Newton polygons have only one slope. Hence they are the minimal polynomials of these elements.

In particular, one deduces that the monomials $\omega^{(n)}(\zeta)^j$, with $0 \leq j \leq |\pfrak| - 2$ (respectively, with $0 \leq j \leq |\pfrak| - 1$), give a field basis for $K_n(\zeta) / K_{n-1}(\zeta)$ when $n = 0$ (resp. when $n \geq 1$); here $K_{-1}(\zeta) := K(\zeta)$. We will refine this observation in the next section.
\end{rem}

\begin{rem}
We expect, for $\deg(\pfrak) \geq 2$ and $n\geq 1$, that there are finite primes other than those above $(\theta - \zeta_i)A[\zeta]$ which divide $\omega^{(n)}(\zeta_i)$. Consideration of the sum of the readily computable $\infty$-adic valuations of $\omega^{(n)}(\zeta)$ and the $(\theta-\zeta_i)$-adic valuations computed above should lead to a verification of this claim.
\end{rem}

\subsection{An integral basis for the prime power extensions} \label{sec:integral-basis}

We remind the reader of the multi-index notation introduced in \S \ref{Tatealgstuff}.

\begin{prop} \label{prop:basis-for-extension}
\begin{enumerate}
\item
The elements ${\omega}^{\bm{e}}$ with $\bm{0}\leq \bm{e}< \bm{q-1}$ form a basis of $K_0(\zeta)$ over $K(\zeta)$.
Further for all $n>0$, the elements $({\omega^{(n)}})^{\bm{e}}$ with $\bm{0}\leq \bm{e}\leq \bm{q-1}$ form a basis of $K_n(\zeta)$ over $K_{n-1}(\zeta)$.
\item If any linear combination $x:=\sum_{{\bm e}} c_{{\bm e}} \cdot ({\omega^{(k)}})^{\bm{e}}$ with
$c_{{\bm e}}\in K_{n-1}(\zeta)$ satisfies $v_i(x)\geq 0$, then
$v_i(c_{{\bm e}})\geq 0$ for all $\bm{e}$.
\end{enumerate}
\end{prop}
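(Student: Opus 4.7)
The plan is to prove both parts simultaneously via a valuation argument at a prime $v_i$ of $K_n(\zeta)$ lying above $(\pfrak)$, using the explicit valuations provided by Theorem \ref{thm:valuations-of-omega-n}.

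First I would set up notation: let $M := [K_n(\zeta):K(\zeta)]$ (so $M = |\pfrak|^n(|\pfrak|-1)$, reading this as $|\pfrak|-1$ when $n = 0$) and $L := [K_n(\zeta):K_{n-1}(\zeta)]$ with the convention $K_{-1}(\zeta) := K(\zeta)$, so $L = |\pfrak|-1$ for $n = 0$ and $L = |\pfrak|$ for $n \geq 1$. Since $(\theta - \zeta_i)$ is totally ramified in $K_n(\zeta)/K(\zeta)$, the valuation $v_i$ takes values in $\tfrac{1}{M}\ZZ$ on $K_n(\zeta)$ and in $\tfrac{L}{M}\ZZ$ on $K_{n-1}(\zeta)$. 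Combining Theorem \ref{thm:valuations-of-omega-n} with $\zeta_k = \zeta_i^{q^{(k-i)\bmod d}}$ yields
\[ v_i\bigl((\omega^{(n)})^{\bm e}\bigr) = \frac{m_{\bm e}}{M}, \qquad m_{\bm e} := \sum_{k=1}^{d} e_k\, q^{(k-i)\bmod d}. \]
By uniqueness of base-$q$ digit expansion, the map $\bm e \mapsto m_{\bm e}$ sends $\{\bm 0 \leq \bm e \leq \bm{q-1}\}$ bijectively onto $\{0,1,\dots,|\pfrak|-1\}$, and excluding $\bm e = \bm{q-1}$ removes the single value $m = |\pfrak|-1$. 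In both cases the $m_{\bm e}$ are pairwise distinct modulo $L$.

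Next I would write $x = \sum_{\bm e} c_{\bm e}(\omega^{(n)})^{\bm e}$ with $c_{\bm e} \in K_{n-1}(\zeta)$ and observe that $M \cdot v_i\bigl(c_{\bm e}(\omega^{(n)})^{\bm e}\bigr) \in L\ZZ + m_{\bm e}$, so that the valuations of the non-zero terms are pairwise distinct. The non-archimedean triangle inequality then gives
\[ v_i(x) = \min_{\bm e\,:\,c_{\bm e}\neq 0} v_i\bigl(c_{\bm e}(\omega^{(n)})^{\bm e}\bigr). \]
For part (1): $x = 0$ forces the minimum to be $+\infty$, hence all $c_{\bm e} = 0$, yielding linear independence; combined with the count $\#\{\bm e\} = L = [K_n(\zeta):K_{n-1}(\zeta)]$, this is the basis claim. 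For part (2): $v_i(x)\geq 0$ gives $v_i(c_{\bm e}) \geq -m_{\bm e}/M > -L/M$ for every $\bm e$, and since $v_i(c_{\bm e})$ is an integer multiple of $L/M$, this forces $v_i(c_{\bm e}) \geq 0$.

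The only mild obstacle is the parallel bookkeeping of $M$ and $L$ across the case split $n = 0$ versus $n \geq 1$, and verifying that excluding $\bm e = \bm{q-1}$ when $n = 0$ still leaves the $m_{\bm e}$ as a complete set of residues modulo $L = |\pfrak|-1$. Once these are in place, both parts collapse to the single principle that distinct valuations force the non-archimedean triangle inequality to hold with equality at the minimum.
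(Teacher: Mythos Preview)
Your proof is correct and follows essentially the same approach as the paper: both compute the $v_i$-valuations of the monomials $(\omega^{(n)})^{\bm e}$ via Theorem~\ref{thm:valuations-of-omega-n}, observe that they represent pairwise distinct cosets of the value group of $K_{n-1}(\zeta)$, and conclude linear independence and the integrality of the coefficients from the ultrametric equality $v_i(x)=\min_{\bm e} v_i(c_{\bm e}(\omega^{(n)})^{\bm e})$. The only cosmetic difference is that you treat a general $v_i$ with the index shift $(k-i)\bmod d$, whereas the paper computes with $v_1$ and then remarks that the same argument handles the remaining $v_i$.
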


\begin{proof}
By Theorem \ref{thm:valuations-of-omega-n}, for all $n\geq 0$ one has
\[ v_1\left( ({\omega^{(n)}})^{\bm{e}}\right)=\sum_{i=1}^d e_i\cdot v_1(\omega^{(n)}(\zeta_i))
=\frac{\sum_{i=1}^d e_iq^{i-1}}{|\pfrak|^n(|\pfrak|-1)}, \]
as $\zeta_i=\zeta_1^{q^{i-1}}$.
For $n=0$, we have $\bm{0}\leq \bm{e}< \bm{q-1}$, and so these values are all different and lie between $0$ (included) and $1$ (excluded). Since the valuation $v_1$ is $\ZZ$-valued on $K(\zeta)$, for every non-trivial linear combination $x:=\sum_{{\bm e}} c_{{\bm e}} {\omega}^{\bm{e}}$, we have
\[ v_1(x)=\min_{\bm e} \{ v_1(c_{{\bm e}} {\omega}^{\bm{e}}) \}. \]
In particular, $x\ne 0$. Furthermore, if $v_1(x)\geq 0$, this implies $v_1(c_{{\bm e}})\geq 0$ for all ${\bm e}$.

For fixed $n\geq 1$, we have $\bm{0}\leq \bm{e}\leq  \bm{q-1}$, and so the values $\frac{\sum_{i=1}^d e_iq^{i-1}}{|\pfrak|^n(|\pfrak|-1)}$ are again all different and lie between $0$ (included) and $\frac{1}{|\pfrak|^{n-1}(|\pfrak|-1)}$ (excluded).
Since the valuation $v_1$ is $\frac{1}{|\pfrak|^{n-1}(|\pfrak|-1)}\ZZ$-valued on $K_{n-1}(\zeta)$, for every non-trivial linear combination $x:=\sum_{{\bm e}} c_{{\bm e}} \cdot ({\omega^{(n)}})^{\bm{e}}$, we have
\[ v_1(x)=\min_{\bm e} \{ v_1(c_{{\bm e}} \cdot ({\omega^{(n)}})^{\bm{e}}) \}. \]
In particular, $x\ne 0$.
Furthermore, if $v_1(x)\geq 0$, this implies $v_1(c_{{\bm e}})\geq 0$ for all ${\bm e}$.

By the same argument, we get the second claim also for the other valuations~$v_i$.
\end{proof}

\begin{cor}[{\bf Non-vanishing of torsion coefficients of \boldmath$\omega^{(n)}(\zeta)$}] \label{nonvantorsioncoeffcor}
Let $c_{(n),i}$ be as defined in \eqref{torsioncoeffseq} for $\omega^{(n)}(\zeta)$, for $i = 0, 1,\dots, d-1$. We have
\[ c_{(n),i} \in C[\pfrak^{n+1}] \setminus C[\pfrak^{n}], \quad \forall i = 0,1,\dots,d-1. \]
In particular, these coefficients are non-zero for all $n \geq 0$ and all $i = 0,1,\dots,d-1$.
\end{cor}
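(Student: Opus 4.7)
The plan is to exploit the explicit formula $c_{(n),i} = \exp_C(\pitilde \qfrak_{(n),i}/\pfrak^{n+1})$ established above, reducing the claim to an algebraic non-vanishing statement in $\FF_q(\zeta)[T]$. Since Proposition~\ref{prop:omega-n-integral} already gives $c_{(n),i} \in C[\pfrak^{n+1}]$, it suffices to show $c_{(n),i} \notin C[\pfrak^n]$ for every $i$. Using $\ker(\exp_C) = \pitilde A$, this is equivalent to $\pfrak \nmid \qfrak_{(n),i}$ in $A$, which in turn is equivalent to $\qfrak_{(n),i}(\zeta) \neq 0$ in $\FF_q(\zeta)$. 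The ``in particular'' clause then follows, since $0 \in C[\pfrak^n]$.

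Next I would compute $\qfrak_{(n)}(\zeta,T) = \sum_i \qfrak_{(n),i}(\zeta) T^i$ as an explicit polynomial of degree $<d$ in $\FF_q(\zeta)[T]$. Starting from the congruence $\qfrak_{(n)}(\theta,T) \equiv \bigl(\tfrac{\pfrak(\theta)-\pfrak(T)}{\theta-T}\bigr)^{n+1} \pmod{\pfrak(T)}$ in $A[T]$ noted in the remark following the formula for $\qfrak_{(n)}(\theta,T)$, and reducing coefficients modulo $\pfrak(\theta)$, I obtain $\qfrak_{(n)}(\zeta,T) \equiv \bigl(\pfrak(T)/(T-\zeta)\bigr)^{n+1} \pmod{\pfrak(T)}$ in $\FF_q(\zeta)[T]$. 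Using the CRT decomposition coming from $\pfrak(T) = \prod_k(T-\zeta_k)$ in $\FF_q(\zeta)[T]$ together with Lagrange interpolation, the right hand side simplifies to $\pfrak'(\zeta)^n \prod_{k \neq 1}(T-\zeta_k)$: indeed $\pfrak(T)/(T-\zeta) = \prod_{k \neq 1}(T-\zeta_k)$ evaluates to $\pfrak'(\zeta)$ at $T = \zeta$ and to $0$ at the other $\zeta_k$, so its $(n+1)$-st power reduces modulo $\pfrak(T)$ to $\pfrak'(\zeta)^n$ times $\prod_{k \neq 1}(T-\zeta_k)$.

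Finally, it remains to verify that every coefficient of $\pfrak(T)/(T-\zeta) \in \FF_q(\zeta)[T]$ is nonzero. Writing $\pfrak(T) = T^d + b_{d-1}T^{d-1} + \cdots + b_0 \in \FF_q[T]$, Horner's synthetic division gives the coefficient of $T^{d-1-k}$ as $\psi_k(\zeta)$, where $\psi_k(X) := X^k + b_{d-1}X^{k-1} + \cdots + b_{d-k} \in \FF_q[X]$ is monic of degree $k \leq d-1$. If $\psi_k(\zeta) = 0$ for some $k$, then the minimal polynomial $\pfrak$ of $\zeta$ would divide the nonzero polynomial $\psi_k$ of strictly smaller degree---a contradiction. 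The main (mild) obstacle I anticipate is bookkeeping the two distinct reductions---modulo $\pfrak(\theta)$ and modulo $\pfrak(T)$---in the second step; once they are kept straight, the rest is immediate.
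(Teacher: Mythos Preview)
Your proof is correct and takes a genuinely different route from the paper's. The paper argues by applying $\cfrak_{\pfrak^n}$ via \eqref{omeganfnleqn} and Lemma~\ref{twistevallem}: one finds that the vector $(\cfrak_{\pfrak^n}(c_{(n),i}))_i$ equals the inverse Vandermonde matrix times the vector $(\pfrak_t^{(1)}(\zeta_k)^n\omega(\zeta_k))_k$, so a vanishing entry $\cfrak_{\pfrak^n}(c_{(n),i})$ would give a nontrivial $A[\zeta]$-linear relation among $\omega(\zeta_1),\dots,\omega(\zeta_d)$, contradicting their linear independence from Proposition~\ref{prop:basis-for-extension}. Thus the paper's proof ultimately rests on the valuation computations of Theorem~\ref{thm:valuations-of-omega-n}. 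Your argument instead exploits the explicit formula $c_{(n),i}=\exp_C(\pitilde\,\qfrak_{(n),i}/\pfrak^{n+1})$ and reduces everything to the purely algebraic fact that $\pfrak(T)/(T-\zeta)\in\FF_q(\zeta)[T]$ has no zero coefficient. This is more elementary---it bypasses the Newton polygon analysis entirely---and as a bonus yields the closed form $\qfrak_{(n)}(\zeta,T)=\pfrak'(\zeta)^{n}\prod_{k\neq 1}(T-\zeta_k)$, which is not visible from the paper's approach. The paper's method, on the other hand, is shorter and more structural: it ties the non-vanishing directly to the basis property without tracking the explicit $\qfrak_{(n),i}$.
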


\begin{proof}
From \eqref{omeganfnleqn} and Lemma \ref{twistevallem}, with $a = \pfrak^n$, we deduce that
\[ \left( \begin{matrix} \cfrak_{\pfrak^{n}}(c_{(n),0}) \\ \cfrak_{\pfrak^{n}}(c_{(n),1}) \\ \vdots \\ \cfrak_{\pfrak^{n}}(c_{(n),d-1}) \end{matrix}  \right) = \left( \begin{matrix} 1 & \zeta_1 & \zeta_1^2 & \cdots & \zeta_1^{d - 1} \\ 1 & \zeta_2 & \zeta_2^2 & \cdots & \zeta_2^{d - 1} \\ \vdots \\ 1 & \zeta_d & \zeta_d^2 & \cdots & \zeta_d^{d - 1} \end{matrix}  \right)^{-1}\left( \begin{matrix} \pfrak_t^{(1)}(\zeta_1)^n \omega(\zeta_1) \\ \pfrak_t^{(1)}(\zeta_2)^n \omega(\zeta_2) \\ \vdots \\ \pfrak_t^{(1)}(\zeta_d)^n \omega(\zeta_d) \end{matrix}  \right).\]
Thus, if $\cfrak_{\pfrak^{n}}(c_{(n),i}) = 0$, for some $i = 0,1,\dots,d-1$, we obtain a non-trivial linear $A[\zeta]$-dependence relation on the elements $\omega(\zeta_1),\dots,\omega(\zeta_d)$, but by Proposition \ref{prop:basis-for-extension}, these latter elements are linearly independent over $K(\zeta)$. This contradiction establishes the result.
\end{proof}

\begin{thm}[{\bf Integral basis for \boldmath$A_n[\zeta]$ over \boldmath$A[\zeta]$}] \label{thm:integral-basis} \
\begin{enumerate}
\item\label{item:n=0} The elements ${\omega}^{\bm{e}}$ with $\bm{0}\leq \bm{e}< \bm{q-1}$ form an integral basis of $A_0[\zeta]$ over $A[\zeta]$.
\item\label{item:n>0} Further, for all $n>0$, the elements $({\omega^{(n)}})^{\bm{e}}$ with $\bm{0}\leq \bm{e}\leq  \bm{q-1}$ form an integral basis of $A_n[\zeta]$ over $A_{n-1}[\zeta]$.
\item\label{item:integral-basis} Hence, the products $\prod_{j = 0}^n (\omega^{(j)})^{{\bm e}_j}$ with $\bm{0}\leq \bm{e}_0< \bm{q-1}$ and $\bm{0}\leq \bm{e}_j\leq  \bm{q-1}$, for $j = 1,\dots,n$, form an integral basis for $A_{n}[\zeta]$ over $A[\zeta]$.
\end{enumerate}
\end{thm}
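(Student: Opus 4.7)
The plan is to prove parts (\ref{item:n=0}) and (\ref{item:n>0}) by a unified argument, and then deduce part (\ref{item:integral-basis}) from them by the tower-transitivity of integral bases.

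For both (\ref{item:n=0}) and (\ref{item:n>0}), the candidate monomials are integral by Proposition \ref{prop:omega-n-integral}, form a basis of the appropriate field extension by Proposition \ref{prop:basis-for-extension}(1), and have cardinality matching the relevant extension degree ($|\pfrak|-1$ for (\ref{item:n=0}) and $|\pfrak|$ for (\ref{item:n>0})). Let $M$ denote the free $A_{n-1}[\zeta]$-submodule of $A_n[\zeta]$ (respectively, the free $A[\zeta]$-submodule of $A_0[\zeta]$ when $n=0$) they span; the task reduces to verifying $M$ equals the full ring of integers. Given $y$ in the ring of integers with basis expansion $y = \sum_{\bm e} c_{\bm e}\,(\omega^{(n)})^{\bm e}$, integrality of each $c_{\bm e}$ at every prime $v_i$ lying above $\pfrak$ is immediate from Proposition \ref{prop:basis-for-extension}(2) applied to $v_i(y) \geq 0$.

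The main obstacle is integrality of the $c_{\bm e}$ at primes $\qfrak$ of the base ring that do \emph{not} lie above $\pfrak$. At such $\qfrak$ the relevant extension is unramified, a consequence of the classical facts that $K_n/K$ is ramified only at $\pfrak$ and $\infty$ and that $K(\zeta)/K$ is a constant extension, unramified at every finite prime. The plan is to finish by a discriminant comparison: both the discriminant of $M$ and the relative discriminant of the ring-of-integers extension have support concentrated above $\pfrak$ (the former because the monomials are integral with Galois pairings controlled by Corollary \ref{galoisactioncor}, the latter by the unramifiedness above), while the already-established local equalities $M_{v_i} = A_n[\zeta]_{v_i}$ at the primes above $\pfrak$ force the global index $[A_n[\zeta]:M]$ to be trivial. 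A clean alternative at each such $\qfrak$ would apply a Nakayama-style reduction to show that the images of $(\omega^{(n)})^{\bm e}$ in the residue field are linearly independent, sufficing in the unramified local setting.

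Part (\ref{item:integral-basis}) is then a formal consequence: in a tower of ring extensions, the products of integral bases at each step form an integral basis of the composite. Applying this iteratively to the chain $A[\zeta] \subseteq A_0[\zeta] \subseteq A_1[\zeta] \subseteq \cdots \subseteq A_n[\zeta]$, using part (\ref{item:n=0}) at the bottom (which contributes the range $\bm{0}\leq\bm{e}_0<\bm{q-1}$) and part (\ref{item:n>0}) at each subsequent step (contributing $\bm{0}\leq\bm{e}_j\leq\bm{q-1}$ for $j\geq 1$), produces exactly the product set described in (\ref{item:integral-basis}) as an integral basis of $A_n[\zeta]$ over $A[\zeta]$.
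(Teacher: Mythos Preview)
Your handling of the primes above $\pfrak$ via Proposition~\ref{prop:basis-for-extension}(2) is correct and matches the paper. The gap is in the step away from $\pfrak$. You assert that the discriminant $\mathfrak{d}(M)$ is supported only above $\pfrak$, but the justification offered (``Galois pairings controlled by Corollary~\ref{galoisactioncor}'') is not a proof: one would need to compute or bound $\det\bigl(\sigma_a((\omega^{(n)})^{\bm e})\bigr)^2$, and for $n\ge 1$ this is not visibly a product of $\omega(\zeta_i)$'s alone. Indeed, the paper's own Remark following Theorem~\ref{thm:valuations-of-omega-n} anticipates that $\omega^{(n)}(\zeta_i)$ has zeros at finite primes \emph{other} than those above $\pfrak$ once $n\ge 1$ and $d\ge 2$, so any argument that implicitly treats the $(\omega^{(n)})^{\bm e}$ as $\pfrak$-supported will fail. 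Your Nakayama alternative is likewise only sketched; showing that the reductions of the $(\omega^{(n)})^{\bm e}$ are linearly independent at an unramified $\qfrak$ is essentially the same problem restated.

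The paper closes this gap differently, and the mechanism is worth internalizing. It applies the Galois elements $\sigma_a$ (for $n=0$) or $\sigma_{1+\pfrak^n b}$ (for $n\ge 1$) to an integral $x=\sum_{\bm e} c_{\bm e}(\omega^{(n)})^{\bm e}$, expands via Corollary~\ref{galoisactioncor} and the binomial theorem, and then uses Lagrange interpolation (Lemma~\ref{lemma:lagrange-interpolation}) in the parameter $a$ (resp.\ $b$) to extract from the Galois orbit the individual terms $c_{\bm e}\,\omega^{\bm e}\in A_0[\zeta]$, respectively (after a downward induction on $\bm g$) the terms $c_{\bm g}\bigl((\pfrak_t^{(1)})^n\omega\bigr)^{\bm g}\in A_n[\zeta]$. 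The crucial point is that these isolated factors involve only $\omega$, not $\omega^{(n)}$; and $\omega(\zeta_i)^{|\pfrak|-1}=\beta(\zeta_i)$ has divisor supported purely above $\pfrak$. Hence each $c_{\bm e}$ can have poles only above $\pfrak$, and Proposition~\ref{prop:basis-for-extension}(2) rules those out. This divisor input for $\omega(\zeta_i)$ is exactly the missing ingredient in your discriminant sketch; with it, one could in principle also push your approach through, but the interpolation argument gets there more directly.
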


\begin{proof}
By Proposition \ref{prop:omega-n-integral}, we already know that the elements $({\omega^{(n)}})^{\bm{e}}$ are integral.
For $n=0$, it is well known that ${\omega}^{\bm{e}}$ form an integral basis as they are $\FF_q(\zeta)^\times$-multiples of Gauss sums; see e.g. \cite[Th\'eor\`eme 2.5]{BAjtnb}.
However, we will give another proof here, since parts of the proof for the elements $({\omega^{(n)}})^{\bm{e}}$ will be in the same flavor.

By Corollary \ref{galoisactioncor}, for all $a\in (A/\pfrak A)^\times$ we have
$ \sigma_a(\omega(\zeta_i))=a_t(\zeta_i)\cdot \omega(\zeta_i)$, and hence
\[ \sigma_a\left( {\omega}^{\bm{e}}\right) = {a_t}^{\bm{e}}  {\omega}^{\bm{e}}. \]
Given $c_{\bm e}\in K(\zeta)$ such that $x:=\sum_{{\bm e}} c_{{\bm e}} {\omega}^{\bm{e}}\in A_0[\zeta]$, we therefore have
\[ \sigma_a( x)= \sum_{{\bm e}} c_{{\bm e}}  {\omega}^{\bm{e}}{a_t}^{\bm{e}}\in A_0[\zeta] \]
for all $a\in (A/\pfrak A)^\times$.
By Lagrange interpolation \ref{lemma:lagrange-interpolation}, we therefore get $c_{{\bm e}}  {\omega}^{\bm{e}}\in  A_0[\zeta]$ for all
${\bm e}$. As ${\omega}^{\bm{e}}$ only has zeros at the primes above $\pfrak$, this implies that the $c_{{\bm e}}$ can only have poles at these primes. In Proposition \ref{prop:basis-for-extension}, however, we already showed that the $c_{{\bm e}}$ do not have poles there. Hence, $c_{{\bm e}}\in A[\zeta]$.

Now let $n\geq 1$. Any $b\in A/\pfrak A$ gives rise to a well-defined $1+\pfrak^{n}b\in (A/\pfrak^{n+1}A)^\times$.
Using Corollary \ref{galoisactioncor} and $\pfrak_t(\zeta_i)=0$, we obtain
\[ \sigma_{1+\pfrak^{n}b}(\omega^{(n)}(\zeta_i))=\omega^{(n)}(\zeta_i)+\left(\pfrak_t^{(1)}(\zeta_i)\right)^nb_t(\zeta_i)\omega(\zeta_i), \]
and hence,
\begin{eqnarray}  \sigma_{1+\pfrak^{n}b}(({\omega^{(n)}})^{\bm{e}})
&=& \left( { \omega^{(n)}}+\bigl({\pfrak_t^{(1)}}\bigr)^n {b_t  \omega } \right)^{\bm{e}} \\
&=& \sum_{\bm{0}\leq\bm{f}\leq \bm{e}} \binom{\bm{e}}{\bm{f}} \left( \bigl({\pfrak_t^{(1)}}\bigr)^n {b_t  \omega } \right)^{\bm{e}-\bm{f}} ({\omega^{(n)}})^{\bm{f}}
\end{eqnarray}
for all $\bm{0}\leq \bm{e}\leq \bm{q-1}$.

Now let $c_{{\bm e}}\in K_{n-1}(\zeta)$, such that
\[x:=\sum_{{\bm e}} c_{{\bm e}} \cdot ({\omega^{(k)}})^{\bm{e}}\in A_{n}[\zeta].\]

For all $b\in A/\pfrak A$, we obtain
\begin{eqnarray}
 \sigma_{1+\pfrak^{n}b}(x) &=& \sum_{\bm{e}}  \sum_{\bm{0}\leq \bm{f}\leq \bm{e}} c_{\bm{e}} \binom{\bm{e}}{\bm{f}} \left( \bigl({\pfrak_t^{(1)}}\bigr)^n {b_t  \omega } \right)^{\bm{e}-\bm{f}} ({\omega^{(n)}})^{\bm{f}}\\
 &=& \sum_{\substack{\bm{0}\leq \bm{f},\bm{g}\\ \bm{f}+\bm{g}\leq \bm{q-1}}}
  c_{\bm{f}+\bm{g}} \binom{\bm{f}+\bm{g}}{\bm{f}} \left( \bigl({\pfrak_t^{(1)}}\bigr)^n {\omega } \right)^{\bm{g}} ({\omega^{(n)}})^{\bm{f}} {b_t}^{\,\bm{g}} \in A_{n}[\zeta]
\end{eqnarray}
By Lagrange interpolation \ref{lemma:lagrange-interpolation}, we therefore get
\[  \sum_{\bm{0}\leq \bm{f}\leq (\bm{q-1})-\bm{g}}
  c_{\bm{f}+\bm{g}} \binom{\bm{f}+\bm{g}}{\bm{f}} \left( \bigl({\pfrak_t^{(1)}}\bigr)^n {\omega } \right)^{\bm{g}} ({\omega^{(n)}})^{\bm{f}}\in A_{n}[\zeta] \]
  for all $\bm{g}$.
Let $\bm{g}$ be such that
$c_{\bm{f}+\bm{g}}\in A_{n-1}[\zeta]$ for all $\bm{f}>\bm{0}$, then we have
\[  c_{\bm{g}} \cdot \left( \bigl({\pfrak_t^{(1)}}\bigr)^n {\omega } \right)^{\bm{g}} \in A_{n}[\zeta].
\]
Hence, $c_{\bm g}$ can only have poles at the primes above $\pfrak$. In Proposition \ref{prop:basis-for-extension}, however, we already showed that the $c_{{\bm e}}$ do not have poles there. So $c_{\bm{g}}$ has to be integral. Inductively this shows that all the coefficients are integral.

The final claim \ref{item:integral-basis} of the statement follows by induction from claims \ref{item:n=0} and \ref{item:n>0}.
\end{proof}

\subsection{Field normal basis}

The degree of the extension $K_0(\zeta)/K(\zeta)$ is prime to the characteristic, and we have an integral basis  $\left\{ {\omega}^{\bm{e}} : \bm{0}\leq \bm{e}< \bm{q-1}\right\}$ with each ${\omega}^{\bm{e}}$ being a basis vector for the isotypic component of a character of the Galois group. Hence, a suitable linear combination of those leads to an integral normal basis of $A_0[\zeta]/A[\zeta]$.

For the extension $K_n(\zeta)/K_0(\zeta)$ the situation is totally different. Here, the degree of the extension is a power of the characteristic and the primes $(\theta-\zeta_i)$ are totally wildly ramified. Hence, there is no integral normal basis of the extension $K_n(\zeta)/K_0(\zeta)$.
However, by the normal basis theorem every finite field extension has a normal basis, and we show in the following that the Galois orbit of our top digit derivative basis element
$\prod_{k=1}^n ({\omega^{(k)}})^{\bm{q-1}}$ is such a field normal basis.

\begin{prop}[{\bf A field normal basis}] \label{prop:normal-basis}
Let
\[\eta_n:=\prod_{k=1}^n ({\omega^{(k)}})^{\bm{q-1}}\in K_{n}(\zeta).\]
Then the Galois orbit of $\eta_n$  is a normal basis for the extension $K_{n}(\zeta)$ over $K_0(\zeta)$.
\end{prop}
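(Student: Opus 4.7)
The plan is to prove by induction on $n \geq 1$ that the $|\pfrak|^n$ Galois conjugates of $\eta_n$ are $K_0(\zeta)$-linearly independent; since $[K_n(\zeta):K_0(\zeta)]=|\pfrak|^n$, this gives a normal basis.

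\textbf{Base case $n=1$.} By Corollary~\ref{galoisactioncor}, $\sigma_{1+\pfrak b}(\omega^{(1)}(\zeta_i)) = \omega^{(1)}(\zeta_i) + \pfrak_t^{(1)}(\zeta_i)b(\zeta_i)\omega(\zeta_i)$. Using $b(\zeta_i)=b(\zeta)^{q^{i-1}}$, the multinomial expansion yields
\[ \sigma_{1+\pfrak b}(\eta_1) = \sum_{j=0}^{|\pfrak|-1} b(\zeta)^j\,C_j\,(\omega^{(1)})^{\bm{q-1}-\bm{e}(j)}, \]
where $\bm{e}(j) = (e_1(j),\ldots,e_d(j))$ is the base-$q$ expansion of $j$ and $C_j = \binom{\bm{q-1}}{\bm{e}(j)}\prod_i(\pfrak_t^{(1)}(\zeta_i)\omega(\zeta_i))^{e_i(j)} \in K_0(\zeta)^\times$ (nonzero since $\binom{q-1}{e}=(-1)^e$ in $\FF_p$ and $\pfrak_t^{(1)}(\zeta_i), \omega(\zeta_i)\ne 0$). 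In the $K_0(\zeta)$-basis $\{(\omega^{(1)})^{\bm{q-1}-\bm{e}(j)}\}_j$ of $K_1(\zeta)$, the coordinate matrix factors as a Vandermonde in the $|\pfrak|$ distinct values $\{b(\zeta):b\in A/\pfrak A\}$ times $\operatorname{diag}(C_j)$, hence is invertible.

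\textbf{Inductive step.} Write $H := \Gal(K_n(\zeta)/K_0(\zeta))$, $H_n := \Gal(K_n(\zeta)/K_{n-1}(\zeta))$, and $G := H/H_n$. Parametrize each $\sigma \in H$ uniquely as $\sigma = \sigma'\sigma''$ with $\sigma' = \sigma_{(1+c_1\pfrak)\cdots(1+c_{n-1}\pfrak^{n-1})}$ running through lifts of $G$ and $\sigma'' = \sigma_{1+c_n\pfrak^n} \in H_n$ (for $c_k \in A/\pfrak A$). Since $\sigma''$ fixes $K_{n-1}(\zeta)$ and acts on $\omega^{(n)}(\zeta_i)$ exactly as in the base case but with $\pfrak_t^{(1)}(\zeta_i)$ replaced by $(\pfrak_t^{(1)}(\zeta_i))^n$, setting $x_n:=c_n(\zeta)$ yields
\[ \sigma(\eta_n) = \sum_{j_n=0}^{|\pfrak|-1} x_n^{j_n}\,D_{j_n}\,\sigma'(\eta_{n-1})\cdot\sigma'\bigl((\omega^{(n)})^{\bm{q-1}-\bm{e}(j_n)}\bigr), \]
with $D_{j_n}\in K_0(\zeta)^\times$ of the form analogous to $C_j$. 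Since $\sigma'(\omega^{(n)}(\zeta_i)) = \omega^{(n)}(\zeta_i) + \ell_i(\sigma')$ with $\ell_i(\sigma')\in K_{n-1}(\zeta)$, a further multinomial expansion gives
\[ \sigma'\bigl((\omega^{(n)})^{\bm f}\bigr) = \sum_{\bm{e}_n \leq \bm f}\binom{\bm f}{\bm{e}_n}L(\sigma',\bm f-\bm{e}_n)(\omega^{(n)})^{\bm{e}_n}, \quad L(\sigma',\bm 0)=1, \]
where $L(\sigma',\bm g) := \prod_i \ell_i(\sigma')^{g_i} \in K_{n-1}(\zeta)$.

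\textbf{Vanishing argument.} Suppose $\sum_\sigma \lambda_\sigma \sigma(\eta_n) = 0$ with $\lambda_\sigma \in K_0(\zeta)$. Using that $\{(\omega^{(n)})^{\bm{e}_n}\}_{\bm 0\leq\bm{e}_n\leq\bm{q-1}}$ is a $K_{n-1}(\zeta)$-basis of $K_n(\zeta)$ (Proposition~\ref{prop:basis-for-extension}) and that, by the inductive hypothesis, $\{\tau(\eta_{n-1})\}_{\tau\in G}$ is a $K_0(\zeta)$-basis of $K_{n-1}(\zeta)$, with $\sigma'(\eta_{n-1})$ precisely the basis vector indexed by the image of $\sigma'$ in $G$, the task reduces to showing $\mu_{\sigma',j_n} := \sum_{\sigma''}\lambda_{\sigma',\sigma''}x_n(\sigma'')^{j_n} = 0$ for all $(\sigma', j_n)$. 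We prove this by induction on the weight $|\bm{e}(j_n)|_1 := \sum_i e_i(j_n)$: fixing $\bm\delta$ with $|\bm\delta|_1 = w$ and extracting the coefficient of $\tau(\eta_{n-1})(\omega^{(n)})^{\bm{q-1}-\bm\delta}$ from the vanishing sum, the contributing indices $j_n$ satisfy $\bm{e}(j_n)\leq\bm\delta$, and the induction hypothesis kills those of weight $<w$; the unique surviving index $j^*$ has $\bm{e}(j^*)=\bm\delta$, and for it $\binom{\bm{q-1}-\bm{e}(j^*)}{\bm{q-1}-\bm\delta}=1$ and $L(\sigma',\bm 0)=1$, so the equation reduces to $D_{j^*}\mu_{\sigma',j^*}=0$ for each $\sigma'$, whence $\mu_{\sigma',j^*}=0$. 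Finally, a Vandermonde in the $|\pfrak|$ distinct values $\{x_n(\sigma''):\sigma''\in H_n\}$ forces $\lambda_{\sigma',\sigma''}=0$. The main obstacle is aligning the two nested basis expansions so that the weight induction on $|\bm\delta|_1$ manifests the required triangular structure.
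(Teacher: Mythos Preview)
Your argument is correct and runs dual to the paper's: the paper proves that the Galois orbit of $\eta_n$ \emph{spans} $K_n(\zeta)$ over $K_0(\zeta)$ by showing that each digit-derivative basis element $\prod_{k=1}^n(\omega^{(k)})^{\bm{e}_k}$ lies in the $K_0(\zeta)$-span of the orbit (first recovering the factors $\eta_{n-1}(\omega^{(n)})^{\bm{e}_n}$ via the action of $\sigma_{1+\pfrak^n b}$ and Lagrange interpolation, then peeling off lower layers by an induction on $|\bm{e}_n|$ modulo the span of smaller monomials), whereas you prove \emph{linear independence} of the orbit directly. Both proofs rest on the same computation of $\sigma_{1+\pfrak^n b}$ acting on $(\omega^{(n)})^{\bm{q-1}}$, the same outer induction on $n$ via the normal-basis property of $\eta_{n-1}$, and structurally analogous weight inductions; the paper's Lagrange interpolation step is your Vandermonde inversion in disguise. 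Your approach is slightly more explicit in packaging the triangular structure via the quantities $\mu_{\sigma',j_n}$, while the paper's spanning argument is perhaps conceptually cleaner in that it avoids any matrix bookkeeping.

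One point of phrasing deserves care. When you write ``extracting the coefficient of $\tau(\eta_{n-1})(\omega^{(n)})^{\bm{q-1}-\bm\delta}$,'' this cannot be done directly from the raw expansion, because the factor $\sigma'(\eta_{n-1})L(\sigma',\bm\delta-\bm{e}(j_n))$ lies in $K_{n-1}(\zeta)$, not $K_0(\zeta)$, and so does not sit over a single basis vector $\tau(\eta_{n-1})$. What actually happens (and what your write-up implicitly does) is: first extract the $K_{n-1}(\zeta)$-coefficient of $(\omega^{(n)})^{\bm{q-1}-\bm\delta}$; then invoke the inner induction hypothesis to kill all $j_n$ of weight below $w$, leaving only $j^*$ with $\bm{e}(j^*)=\bm\delta$, for which $L(\sigma',\bm 0)=1$; and only then, now that the remaining coefficient $D_{j^*}\mu_{\sigma',j^*}$ lies in $K_0(\zeta)$, use the normal-basis property of $\{\sigma'(\eta_{n-1})\}$ to conclude $\mu_{\sigma',j^*}=0$. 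It would be worth making this order of operations explicit.
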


\begin{proof}
Allover we do an induction on $n$.\\
For $n=0$, this means $\eta_0=1$ is a normal basis for $K_0(\zeta)$ over itself which is trivially fulfilled.
Hence, as induction hypothesis we assume that $\eta_{n-1}$ generates a normal basis for $K_{n-1}(\zeta)$ over $K_{0}(\zeta)$.

As we already know that all the elements $\prod_{k=1}^n ({\omega^{(k)}})^{\bm{e_k}}$ with $\bm{0}\leq \bm{e_k}\leq \bm{q-1}$ are a basis of the extension $K_{n}(\zeta)$ over $K_{0}(\zeta)$, we are going to show that they all lie in the $K_{0}(\zeta)$-span of the Galois-conjugates of $\eta_n=\eta_{n-1}\cdot ({\omega^{(n)}})^{\bm{q-1}}$.

\textbf{First step:} All $\eta_{n-1}\cdot ({\omega^{(n)}})^{\bm{e_n}}$ ($\bm{0}\leq \bm{e_n}\leq \bm{q-1}$) lie in the $K_{0}(\zeta)$-span of the Galois-conjugates of $\eta_n$.

Applying $1+\pfrak^{n}b\in (A/\pfrak^{n+1}A)^\times$ as in the previous proof, we obtain
\begin{eqnarray*}
\sigma_{1+\pfrak^{n}b}\left( \eta_{n-1}\cdot ({\omega^{(n)}})^{\bm{q-1}} \right)
&=& \eta_{n-1}\cdot \left( \sum_{\bm{f}+\bm{e}= \bm{q-1}} \binom{\bm{q-1}}{\bm{e}} \left( \bigl({\pfrak_t^{(1)}}\bigr)^n {b_t  \omega } \right)^{\bm{f}} ({\omega^{(n)}})^{\bm{e}} \right) \\
&=&   \sum_{\bm{f}+\bm{e}= \bm{q-1}} (-1)^{|\bm{e}|} \eta_{n-1}\left( \bigl({\pfrak_t^{(1)}}\bigr)^n {\omega } \right)^{\bm{f}}({\omega^{(n)}})^{\bm{e}} {b_t}^{\bm{f}}.
\end{eqnarray*}
By Lagrange interpolation \ref{lemma:lagrange-interpolation}, there are $\FF_q(\zeta)$-linear combinations of these Galois conjugates being equal to
$ \eta_{n-1}\left( \bigl({\pfrak_t^{(1)}}\bigr)^n {\omega } \right)^{\bm{f}}({\omega^{(n)}})^{\bm{e}}$, and hence $K_{0}(\zeta)$-linear combinations being equal to
$$ \eta_{n-1}({\omega^{(n)}})^{\bm{e}}.$$

\textbf{Second step:} By induction on $|\bm{e_n}|$ we show that all  $\prod_{k=1}^n ({\omega^{(k)}})^{\bm{e_k}}$ lie in the $K_{0}(\zeta)$-span of the Galois-conjugates of $\eta_n$.

The case $|\bm{e_n}|=0$ (i.e. $\bm{e_n}=\bm{0}$) is a consequence of the first step and the assumption the $\eta_{n-1}$ generates a normal basis for $K_{n-1}(\zeta)$ over $K_{0}(\zeta)$.\\
Let $V$ denote the $K_{0}(\zeta)$-space generated by all $\prod_{k=1}^{n-1} ({\omega^{(k)}})^{\bm{e_k}}\cdot ({\omega^{(n)}})^{\bm{f}}$ with $|\bm{f}|<|\bm{e_n}|$.
As induction hypothesis, we assume that $V$ lies inside the $K_{0}(\zeta)$-span of the Galois-conjugates of $\eta_n$.\\
By the first step, $\eta_{n-1}\cdot ({\omega^{(n)}})^{\bm{e_n}}$ is in the Galois-span, and for all $a\in (1+\pfrak A)/(1+\pfrak^{n+1}A)\subset (A/\pfrak^{n+1} A)^\times$, we have
\begin{eqnarray*}
\sigma_a(\eta_{n-1}\cdot ({\omega^{(n)}})^{\bm{e_n}} ) &=& \sigma_a(\eta_{n-1})\cdot \left( {\omega^{(n)}}+{a_t^{(1)}\omega^{(n-1)}}+\dots + {a_t^{(n)}\omega} \right)^{\bm{e_n}} \\
&\equiv &  \sigma_a(\eta_{n-1})\cdot {{\omega^{(n)}}}^{\bm{e_n}} \mod{V}
\end{eqnarray*}
As $\eta_{n-1}$ generates a normal basis, there is some $K_{0}(\zeta)$-linear combinations of the
$\sigma_a(\eta_{n-1})\cdot ({\omega^{(n)}})^{\bm{e_n}} $ congruent to
$\prod_{k=1}^n ({\omega^{(k)}})^{\bm{e_k}}$ modulo $V$. Hence, $\prod_{k=1}^n ({\omega^{(k)}})^{\bm{e_k}}$ lies in the $K_{0}(\zeta)$-span of the Galois-conjugates of $\eta_n$.
\end{proof}

\begin{rem}
Though there can be no integral normal basis of the extensions $A_n[\zeta] / A[\zeta]$, when $n \geq 1$, due to wild ramification, one may ask a more refined question, following Leopoldt: Can the extension $A_n[\zeta] / A[\zeta]$ be cyclically generated by the maximal order $\mathcal{R}_n \subset K(\zeta)[G_n]$ such that $\mathcal{R}_n A_n[\zeta] \subset A_n[\zeta]$, where $G_n = \Gal(K_n(\zeta) / K(\zeta))$? A. Aiba has given some counter examples to such questions in this context; see \cite{Aiba1,Aiba2}.
\end{rem}

\end{document}